\newtheorem{theorem}{Theorem}[subsection]
\newtheorem{proto-definition}[theorem]{Proto-Definition}
\newtheorem{pseudo-definition}[theorem]{Pseudo-Definition}
\newtheorem{definition-lemma}[theorem]{Definition/Lemma}
\newtheorem{definition-explanation}[theorem]{Definition/Explanation}
\newtheorem{explanation-definition}[theorem]{Explanation/Definition}
\newtheorem{definition-fact}[theorem]{Definition/Fact}
\newtheorem{definition-notation}[theorem]{Definition/Notation}
\newtheorem{definition-conjecture}[theorem]{Definition/Conjecture}
\newtheorem{definition-theorem}[theorem]{Definition/Theorem}
\newtheorem{lemma-definition}[theorem]{Lemma/Definition}
\newtheorem{remark-notation}[theorem]{\it Remark/Notation}
\newtheorem{application-lemma}[theorem]{Application/Lemma}
\newtheorem{example-definition}[theorem]{Example/Definition}
\newtheorem{definition-prototype}[theorem]{Definition-Prototype}
\numberwithin{equation}{subsection}
\newtheorem{stheorem}{Theorem}[section]
\newtheorem{sdefinition}[stheorem]{Definition}
\newtheorem{sproto-definition}[stheorem]{Proto-Definition}
\newtheorem{spseudo-definition}[stheorem]{Pseudo-Definition}
\newtheorem{sdefinition-lemma}[stheorem]{Definition/Lemma}
\newtheorem{sdefinition-explanation}[stheorem]{Definition/Explanation}
\newtheorem{sexplanation-definition}[stheorem]{Explanation/Definition}
\newtheorem{sdefinition-fact}[stheorem]{Definition/Fact}
\newtheorem{sdefinition-notation}[stheorem]{Definition/Notation}
\newtheorem{sdefinition-conjecture}[stheorem]{Definition/Conjecture}
\newtheorem{sdefinition-theorem}[stheorem]{Definition/Theorem}
\newtheorem{slemma}[stheorem]{Lemma}
\newtheorem{slemma-definition}[stheorem]{Lemma/Definition}
\newtheorem{scorollary}[stheorem]{Corollary}
\newtheorem{sremark-notation}[stheorem]{\it Remark/Notation}
\newtheorem{sconjecture}[stheorem]{Conjecture}
\newtheorem{sapplication-lemma}[stheorem]{Application/Lemma}
\newtheorem{sexample}[stheorem]{Example}
\newtheorem{sexample-definition}[stheorem]{Example/Definition}
\newtheorem{sdefinition-prototype}[stheorem]{Definition-Prototype}
\newtheorem{ssproto-definition}[sstheorem]{Proto-Definition}
\newtheorem{sspseudo-definition}[sstheorem]{Pseudo-Definition}
\newtheorem{ssdefinition-lemma}[sstheorem]{Definition/Lemma}
\newtheorem{ssdefinition-explanation}[sstheorem]{Definition/Explanation}
\newtheorem{ssexplanation-definition}[sstheorem]{Explanation/Definition}
\newtheorem{ssdefinition-fact}[sstheorem]{Definition/Fact}
\newtheorem{ssdefinition-notation}[sstheorem]{Definition/Notation}
\newtheorem{ssdefinition-conjecture}[sstheorem]{Definition/Conjecture}
\newtheorem{ssdefinition-theorem}[sstheorem]{Definition/Theorem}
\newtheorem{sslemma-definition}[sstheorem]{Lemma/Definition}
\newtheorem{ssremark-notation}[sstheorem]{\it Remark/Notation}
\newtheorem{ssapplication-lemma}[sstheorem]{Application/Lemma}
\newtheorem{ssexample-definition}[sstheorem]{Example/Definition}
\newtheorem{ssdefinition-prototype}[sstheorem]{Definition-Prototype}
\newcommand{\End}{\mbox{\it End}\,}
\newcommand{\Hom}{\mbox{\it Hom}\,}
\newcommand{\Id}{\mbox{\it Id}\,}
\newcommand{\Image}{\mbox{\it Im}\,}
\newcommand{\Spec}{\mbox{\it Spec}\,}
\newcommand{\determinant}{\mbox{\it det}\,}
\newcommand{\dimm}{\mbox{\it dim}\,}
\newcommand{\pr}{\mbox{\it pr}}
\begin{document}

\enlargethispage{24cm}

\begin{titlepage}

$ $

\vspace{-1.5cm} 

\noindent\hspace{-1cm}
\parbox{6cm}{\small August 2015}\
   \hspace{7cm}\
   \parbox[t]{6cm}{yymm.nnnnn [math.DG] \\
                D(11.3.1): smooth map  \\
				}

\vspace{2cm}

\centerline{\large\bf
 Further studies on}
\vspace{1ex}
\centerline{\large\bf
 the notion of differentiable maps from Azumaya/matrix manifolds}
\vspace{1ex}
\centerline{\large\bf
 I.\ The smooth case}

\vspace{3em}

\centerline{\large
  Chien-Hao Liu
            \hspace{1ex} and \hspace{1ex}
  Shing-Tung Yau
}

\vspace{4em}

\begin{quotation}
\centerline{\bf Abstract}

\vspace{0.3cm}

\baselineskip 12pt  
{\small
 In this follow-up of our earlier two works
    D(11.1) (arXiv:1406.0929 [math.DG])  and
    D(11.2) (arXiv:1412.0771 [hep-th])
      in the D-project,
 we study further
   the notion of a `differentiable map from an Azumaya/matrix manifold to a real manifold'.
 A conjecture is made
  that  the notion of differentiable maps from Azumaya/matrix manifolds
    as defined in D(11.1)
    is equivalent to one defined through the contravariant ring-homomorphisms alone.
 A proof  of this conjecture for the smooth (i.e.\ $C^{\infty}$) case is given in this note.
 Thus, at least in the smooth case,
  our setting for D-branes in the realm of differential geometry
  is completely parallel to that in the realm of algebraic geometry,
  cf.\ arXiv:0709.1515 [math.AG] and arXiv:0809.2121 [math.AG].
 A related conjecture on such maps to ${\Bbb R}^n$, as a $C^k$-manifold,
  and its proof in the $C^{\infty}$ case is also given.
 As a by-product, a conjecture on a division lemma in the finitely differentiable case
  that generalizes the division lemma in the smooth case from Malgrange
  is given in the end, as well as other comments on the conjectures in the general $C^k$ case.
 We remark that there are similar conjectures in general and theorems in the smooth case
   for the fermionic/super generalization of the notion.
} 
\end{quotation}

\vspace{9.6em}

\baselineskip 12pt
{\footnotesize
\noindent
{\bf Key words:} \parbox[t]{14cm}{D-brane;
 Azumaya manifold, matrix manifold, smooth map, ring-homomorphism, spectral subscheme;
 germ of differentiable functions, Malgrange Division Theorem, division lemma
 }} 

 \bigskip

\noindent {\small MSC number 2010: 58A40, 14A22, 81T30; 51K10, 16S50, 46L87.
} 

\bigskip

\baselineskip 10pt
{\scriptsize
\noindent{\bf Acknowledgements.}
We thank Cumrun Vafa and Andrew Strominger for lectures
  that influence our understanding on themes in string theory and gravity.
C.-H.L.\ thanks in addition
  Yuan-Pin Lee, Hui-Wen Lin, Chin-Lung Wang
     for discussions on the algebro-geometric aspect of the notion
     and the algebraic Gromov-Witten type theory from D-strings;	
  Wu-Yen Chuang
     for discussions on stability conditions; 	
  Yng-Ing Lee, Mao-Pei Tsui
     for discussions on fuzzy Lagrangian submanifolds, harmonic D-branes and issues beyond;
  Pei-Ming Ho
     for discussions on technical issues in noncommutative geometry and literature guide;
  Chieh-Hsiung Kuan, Shr-Wei Liu, Ai-Nung Wang for other discussions/conversations;
  Heng-Yu Chen, Kerwin Hui, Chen-Te Ma, Chien-Hsun Wang, Chong-Tang Wu
     for pinning him down to details, challenging his thought,
	 drawing his attention to several themes and literatures that may be related;   	
 the Department of Mathematics and the Department of Physics at the National Taiwan University
     for hospitality, May 2015;
 Ling-Miao Chou
     for comments on illustrations and moral support.
 The project is supported by NSF grants DMS-9803347 and DMS-0074329.
} 

\end{titlepage}

\newpage

\begin{titlepage}

$ $

\vspace{12em}

\centerline{\includegraphics[width=0.24\textwidth]{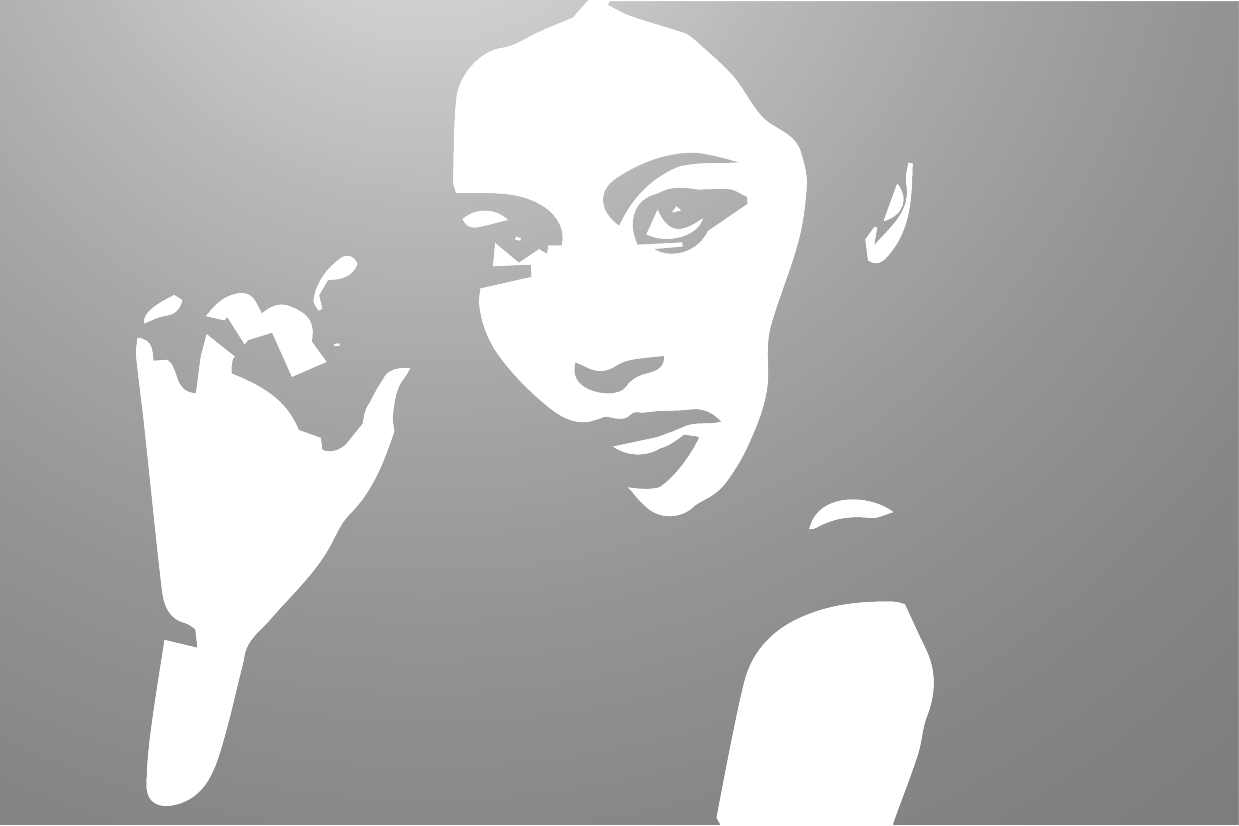}}

\bigskip

\centerline{\small\it
 Chien-Hao Liu dedicates this note to the memory of}
\centerline{\small\it
 Beina Yao $\;(1981$--$2015)$.}

\vspace{24em}

\baselineskip 11pt

{\footnotesize
\noindent
(From C.H.L.)
 Playing the flute, as well as other woodwind instruments,
   is really meant to sing through the piece of metal (or wood).
 This is why when Beina's singing came across my world by accident in late 2014,
   it immediately attracted and touched me.
 Her voice has a rich and solid timbre
    with the flexibility to cover a wide range
    usually only a mechanical musical instrument can reach without losing its elegance, cleanness, and fullness.
 Despite years of rigorous conservatory training in Chinese traditional folk music,
  her voice remains so natural and her singing looks so effortless.
 (An over-trained voice tends to become unhuman, which I never liked.)
 But she won't be that unique if this is all she got.
 What's truly inspiring and respectable is
  her brave twist of her path in music to follow her own heart and avoid a safe but otherwise mediocre life,
  her courage to battle with her cancer,  and
  her noble and philanthropic legacy in the end of her life,
    donating what her body could offer to enable other people to see the world again.
 At this moment of another return-to-zero/origin in both the project and my life,
  her example rings particularly deep in my heart.
 Last but not the least,
  my respect and consolation to her parents, who reminded me of a musical family that influenced me forever.
 The major part of the current note was typed with the company of her songs.
 } 

\end{titlepage}


\newpage
$ $

\vspace{-3em}

\centerline{\sc
 Notion of Differentiable Maps from Azumaya/Matrix Manifolds I: Smooth Case
 } %

\vspace{2em}


\begin{flushleft}
{\Large\bf 0. Introduction and outline}
\end{flushleft}
The notion of differentiable maps from an Azumaya/matrix manifold
 (with a fundamental module) to a real manifold
 was developed in [L-Y2] (D(11.1))
 as a natural mathematical language to describe D-branes as a dynamical/fundamental object in string theory.
(See [Liu] for a review.)
Its fermionic/super generalization was given  in [L-Y3] (D(11.2)).
At the first sight and in comparison with the setting in [L-Y1] (D(1)),
 the mathematical setting for D-branes in the realm of differential geometry
  look more involved/constrained.
The core notion behind is the notion of
 `{\it $C^k$-admissible ring-homomorphisms}' from the $C^k$-function ring of a $C^k$-manifold
 to  the Azumaya/matrix-function ring of an Azumaya/matrix $C^k$-manifold;
 ([L-Y2: Definition~5.1.2] (D(11.1), reviewed in Definition~1.2, Sec.\ 1 of the current note).
The setup of this notion meant to deal with the technical issue
     that a noncommutative ring cannot be made a $C^k$-ring
 and was guided by the principle
    that any good notion of a `map' should be accompanied by a corresponding natural notion
     of the `graph' of the map.
	
 In this follow-up of [L-Y2] (D(11.1)) and [L-Y3] (D(11.2)), 	
 we examine further
   the notion of a `differentiable map from an Azumaya/matrix manifold to a real manifold'.
 A conjecture is made
  that  the notion of differentiable maps from Azumaya/matrix manifolds
    as defined in [L-Y2] (D(11.1))
    is equivalent to one defined through the contravariant ring-homomorphisms alone.
 A proof  of this conjecture for the smooth (i.e.\ $C^{\infty}$) case is given in this note.
 Thus, at least in the smooth case,
  our setting for D-branes in the realm of differential geometry
  is completely parallel to that in the realm of algebraic geometry,
  cf.\ [L-Y1] (D(1)) and [L-L-S-Y] (D(2)).
 A related conjecture on such maps to ${\Bbb R}^n$, as a $C^k$-manifold,
  and its proof in the $C^{\infty}$ case is also given.
 As a by-product, a conjecture on a division lemma in the finitely differentiable case
  that generalizes the division lemma in the smooth case from Malgrange
  is given in the end, as well as other comments on the conjectures in the general $C^k$ case.

 We remark that there are similar conjectures in general and theorems in the smooth case
   for the fermionic/super generalization of the notion.

\bigskip

\bigskip

\noindent
{\bf Convention.}
 References for standard notations, terminology, operations and facts in
    (1) algebraic geometry;
    (2) synthetic geometry, $C^{\infty}$-algebraic geometry;
    (3) D-branes
 can be found respectively in
    (1) [Ha];
    (2) [Du1], [Du2], [Jo], [Ko], [M-R];
    (3) [Po].
 \begin{itemize}
  \item[$\cdot$]
   For clarity, the {\it real line} as a real $1$-dimensional manifold is denoted by ${\Bbb R}^1$,
    while the {\it field of real numbers} is denoted by ${\Bbb R}$.
   Similarly, the {\it complex line} as a complex $1$-dimensional manifold is denoted by ${\Bbb C}^1$,
    while the {\it field of complex numbers} is denoted by ${\Bbb C}$.
	
  \item[$\cdot$]	
  The inclusion `${\Bbb R}\hookrightarrow{\Bbb C}$' is referred to the {\it field extension
   of ${\Bbb R}$ to ${\Bbb C}$} by adding $\sqrt{-1}$, unless otherwise noted.

  \item[$\cdot$]
   The {\it complexification} of an ${\Bbb R}$-module $M$ is denoted by
    $M^{\Bbb C}\;(:= M\otimes_{\Bbb R}{\Bbb C})$.

 \item[$\cdot$]	
  The {\it real $n$-dimensional vector spaces} ${\Bbb R}^{\oplus n}$
      vs.\ the {\it real $n$-manifold} $\,{\Bbb R}^n$; \\
  similarly, the {\it complex $r$-dimensional vector space ${\Bbb C}^{\oplus r}$}
     vs.\ the {\it complex $r$-fold} $\,{\Bbb C}^r$.

 \item[$\cdot$]
  All $C^k$-manifolds, $k\in{\Bbb Z}_{\ge 0}\cup\{\infty\}$,
     are paracompact, Hausdorff, admitting a (locally finite) partition of unity,
     and embeddable into some ${\Bbb R}^N$ as closed $C^k$-submanifolds.
  We adopt the {\it index convention for tensors} from differential geometry.
   In particular, the tuple coordinate functions on an $n$-manifold is denoted by, for example,
   $(y^1,\,\cdots\,y^n)$.
  However, no up-low index summation convention is used.

  \item[$\cdot$]
   `{\it differentiable}' = $k$-times differentiable (i.e.\ $C^k$)
         for some $k\in{\Bbb Z}_{\ge 1}\cup\{\infty\}$;
   `{\it smooth}' $=C^{\infty}$;
   $C^0$ = {\it continuous} by standard convention.

  \item[$\cdot$]
   $\Spec R $ ($:=\{\mbox{prime ideals of $R$}\}$)
         of a commutative Noetherian ring $R$  in algebraic geometry\\
   vs.\ $\Spec R$ of a $C^k$-ring $R$
  ($:=\Spec^{\Bbb R}R :=\{\mbox{$C^k$-ring homomorphisms $R\rightarrow {\Bbb R}$}\}$).

  \item[$\cdot$]
  {\it morphism} between schemes in algebraic geometry
    vs.\ {\it $C^k$-map} between $C^k$-manifolds or $C^k$-schemes
         	in differential topology and geometry or $C^k$-algebraic geometry.
			
  \item[$\cdot$]			
   {\it matrix} $m$ vs.\ manifold of {\it dimension} $m$.
\end{itemize}

\bigskip

\begin{flushleft}
{\bf Outline}
\end{flushleft}
\nopagebreak
{\small
 \baselineskip 12pt  
 \begin{itemize}
    \item[1]
     Conjectures on the notion of $C^k$-maps from Azumaya/matrix $C^k$-manifolds	 	
   	
    \item[2]
	 Preliminaries: When $X$ is a point	
	  \vspace{-.6ex}
	  \begin{itemize}
	   \item[\Large$\cdot\;$]
         The canonical $C^k$-ring structure on a finite-dimensional ${\Bbb R}$-algebra

	   \item[\Large$\cdot\;$]	
         Validity of Conjecture~1.3 when $X$ is a point
						
       \item[\Large$\cdot\;$]
         Validity of Conjecture~1.5 when $X$ is a point
	  \end{itemize}
	
    \item[3]	
	 Proof of Conjectures in the $C^{\infty}$ case
	  \vspace{-.6ex}
	  \begin{itemize}
        \item[3.1]		
		 Proof of Conjecture~1.3 in the $C^{\infty}$ case

	    \item[3.2]
	     Proof of Conjecture~1.5 in the $C^{\infty}$ case		
	  \end{itemize}
	    	
	\item[4]
     Remarks on the general $C^k$ case	
	  \vspace{-.6ex}
	  \begin{itemize}
	    \item[\Large$\cdot\;$]
          Reflections on $C^{\infty}$- vs.\ general $C^k$-algebraic geometry, and the proof

	    \item[\Large$\cdot\;$]	
          A conjecture on a division lemma in the finitely differentiable case
	  \end{itemize}
 \end{itemize}
} 

\newpage

\section{Conjectures on the notion of $C^k$-maps from Azumaya/matrix $C^k$-manifolds}

Let
 \begin{itemize}
  \item[\LARGE $\cdot$]
   $X$, $Y$ be $C^k$-manifolds, $k\in {\Bbb Z}_{\ge 0}\cup\{\infty\}$,
    with their $C^k$-function ring denoted $C^k(X)$ and $C^k(Y)$ respectively;
   (and their structure sheaf ${\cal O}_X$ and ${\cal O}_Y$ respectively); 	
	
  \item[\LARGE $\cdot$]	
   $E$ be a complex $C^k$ vector bundle on $X$ of rank $r$; \\ and
   $C^k(\End_{\Bbb C}(E))$ be its endomorphism algebra.
 \end{itemize}

\bigskip

Note that
 if an ${\Bbb R}$-subalgebra $S\subset C^k(\End_{\Bbb C}(E))$
    admits a $C^k$-ring structure,
 that structure is unique up to $C^k$-ring isomorphisms.

\bigskip

\begin{sdefinition} {\bf [weakly $C^k$-admissible ring-homomorphism].} \rm
 A ring-homomorphism
  $$
   \varphi^{\sharp}\; :\; C^k(Y)\longrightarrow\;  C^k(\End_{\Bbb C}(E))
  $$
  over ${\Bbb R}\hookrightarrow{\Bbb C}$
  is said to be {\it weakly $C^k$-admissible}
 if the ${\Bbb R}$-subalgebra $\Image\varphi^{\sharp}$ of $C^k(\End_{\Bbb C}(E))$
  admits a $C^k$-ring structure
  with respect to which $\varphi^{\sharp}$ is a $C^k$-ring-homomorphism.
\end{sdefinition}

\medskip

\begin{sdefinition} {\bf [$C^k$-admissible ring-homomorphism].} \rm\\
 (Cf.\ [L-Y2: Definition~5.1.2] (D(11.1)).)$\;$
 A ring-homomorphism
    $$
	  \xymatrix{
	   C^k(\End_{\Bbb C}(E))   &&& C^k(Y)\ar[lll]_-{\varphi^{\sharp}}
	   }
	$$
    over ${\Bbb R}\hookrightarrow {\Bbb C}$
    is said to be {\it $C^k$-admissible}	
    if it extends to the following commutative diagram of ring-homomorphisms
	 $$
		 \xymatrix{
	       C^k(\End_{\Bbb C}(E))
			     &&& C^k(Y) \ar[lll]_-{\varphi^{\sharp}}
			                                      \ar@{_{(}->}^-{pr_Y^{\sharp}}[d]   \\			    
			   \rule{0ex}{1em}C^k(X) \ar@{^{(}->}[u]
			                                                                 \ar@{^{(}->}[rrr]_-{pr_X^{\sharp}}
				 &&& C^k(X\times Y) \ar[lllu]_-{\tilde{\varphi}^{\sharp}}		
		}
	 $$
	 such that
	  \begin{itemize}
	   \item[(1)]
	      $\;\tilde{\varphi}^{\sharp}: C^k(X\times Y)
	                  \rightarrow \Image(\tilde{\varphi}^{\sharp})$
			    is  a $C^k$-normal quotient
		 (i.e.\ the $C^k$-ring structure on $C^k(X\times Y)$ descends
		            to a $C^k$-ring structure on $\Image(\tilde{\varphi}^{\sharp})$ ),
 			
      \item[(2)]
	     replacing $C^k(\End_{\Bbb C}(E))$ with
             $$
			  A_{\varphi}\;
			     =\; C^k(X)\langle \Image(\varphi^{\sharp})\rangle\;
			    :=\; \Image(\tilde{\varphi}^{\sharp})\;\;\;
			   \subset\; C^k(\End_{\Bbb C}(E))
			   $$
			  with the $C^k$-ring structure induced from that of $C^k(X\times Y)$ by Condition (1),
          then 			
  	    $$
		 \xymatrix{
	       A_{\varphi}
			     &&& C^k(Y) \ar[lll]_-{\varphi^{\sharp}}
			                                      \ar@{_{(}->}^-{pr_Y^{\sharp}}[d]   \\			    
			   \rule{0ex}{1em}C^k(X) \ar@{^{(}->}[u]
			                                                                 \ar@{^{(}->}[rrr]_-{pr_X^{\sharp}}
				 &&& C^k(X\times Y) \ar@{->>}[lllu]_-{\tilde{\varphi}^{\sharp}}		
		}
	   $$
	   is a commutative diagram of $C^k$-ring homomorphisms.
   \end{itemize}	
\end{sdefinition}

\bigskip

Clearly, for a correspondence $\varphi^{\sharp}:C^k(Y)\rightarrow C^k(\End_{\Bbb C}(E))$,
 $$
  \begin{array}{cl}
      & \mbox{$\varphi^{\sharp}$ is a $C^k$-admissible ring-homomorphism
  	                      over ${\Bbb R}\hookrightarrow {\Bbb C}$} \\[.6ex]
	\Longrightarrow
      & \mbox{$\varphi^{\sharp}$ is a weakly $C^k$-admissible ring-homomorphism
	                      over ${\Bbb R}\hookrightarrow{\Bbb C}$}\\[.6ex]
    \Longrightarrow
      & \mbox{$\varphi^{\sharp}$ is a ring-homomorphism
	                      over ${\Bbb R}\hookrightarrow{\Bbb C}$}\;.
  \end{array}	
 $$
And $\varphi^{\sharp}$ in Definition~1.2
 is what we employed in [L-Y2] to define the notion of a $C^k$-map
 $$
   \varphi\; :\;  (X^{\!A\!z},E)\; \longrightarrow\; Y\,,
 $$
 following the spirit of Grothendieck's setting for modern (commutative) Algebraic Geometry.
It resolves
 the issue that $C^k(\End_{\Bbb C}(E))$, $r\ge 2$, can never be made a $C^k$-ring
 and at the same time makes
     the notion of the `{\it graph}' of $\varphi$,
	 a ${\cal O}_{X\times Y}^{\,\Bbb C}$-module $\tilde{\cal E}_{\varphi}$,
   naturally built into the definition of the differentiable map $\varphi$.

\bigskip

\begin{sconjecture} {\bf [$C^k$-map vs.\ ring-homomorphism].}
 Let
   $X$ and $Y$ be $C^k$-manifolds  and $E$ be a complex $C^k$ vector bundle of rank $r$ on $X$.
 Given a correspondence
   $$
     \varphi^{\sharp}\;: C^k(Y)\;\longrightarrow\; C^k(\End_{\Bbb C}(E))\,.
   $$
 Then, the following three statements are equivalent:
   \begin{itemize}
     \item[\rm (1)]
	  $\varphi^{\sharp}$ is a ring-homomorphism over ${\Bbb R}\hookrightarrow {\Bbb C}$.
	
     \item[\rm (2)]
	  $\varphi^{\sharp}$ is a weakly $C^k$-admissible ring-homomorphism
	   over ${\Bbb R}\hookrightarrow {\Bbb C}$.
	
	 \item[\rm (3)]
	  $\varphi^{\sharp}$ is a $C^k$-admissible ring-homomorphism
	   over ${\Bbb R}\hookrightarrow {\Bbb C}$.
   \end{itemize}
\end{sconjecture}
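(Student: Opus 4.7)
The implications $(3) \Rightarrow (2) \Rightarrow (1)$ are immediate from Definitions 1.1 and 1.2, so the plan focuses on $(1) \Rightarrow (3)$. I intend to factor this as $(1) \Rightarrow (2) \Rightarrow (3)$, leveraging the $C^{\infty}$-ring machinery and the pointwise finiteness forced by the rank-$r$ bundle $E$.

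\textbf{Step 1 (core of the argument): $(1) \Rightarrow (2)$.} The plan is to analyze $\varphi^{\sharp}$ fiberwise. For each $x \in X$, evaluation gives a ring-homomorphism $\varphi^{\sharp}_x : C^{\infty}(Y) \to \End_{\Bbb C}(E_x) \simeq M_r({\Bbb C})$, whose image is a commutative ${\Bbb R}$-subalgebra of a finite-dimensional ${\Bbb R}$-algebra; hence $I_x := \ker\varphi^{\sharp}_x$ has finite ${\Bbb R}$-codimension in $C^{\infty}(Y)$. Invoking the point-case result of Section~2---every finite-dimensional commutative ${\Bbb R}$-algebra carries a canonical $C^{\infty}$-ring structure, with respect to which any ${\Bbb R}$-algebra homomorphism from a $C^{\infty}$-ring is automatically a $C^{\infty}$-ring homomorphism---each $I_x$ is a $C^{\infty}$-ring ideal. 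I would then observe that $\ker\varphi^{\sharp} = \bigcap_{x \in X} I_x$ and that an arbitrary intersection of $C^{\infty}$-ring ideals is again a $C^{\infty}$-ring ideal (the quotient embeds diagonally into $\prod_x C^{\infty}(Y)/I_x$ and inherits the $C^{\infty}$-operations coordinatewise). This endows $\Image(\varphi^{\sharp})$ with the required $C^{\infty}$-ring structure, giving (2).

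\textbf{Step 2: $(2) \Rightarrow (3)$.} Once $\Image(\varphi^{\sharp})$ is a $C^{\infty}$-ring, the inclusion $C^{\infty}(X) \hookrightarrow A_{\varphi}$ and $\varphi^{\sharp} : C^{\infty}(Y) \to A_{\varphi}$ become two $C^{\infty}$-ring homomorphisms into a common commutative $C^{\infty}$-subring $A_{\varphi} \subset C^{\infty}(\End_{\Bbb C}(E))$; along the way one checks that the subring generated by $C^{\infty}(X)$ and $\Image(\varphi^{\sharp})$ does inherit a well-defined $C^{\infty}$-ring structure from the push-out $C^{\infty}(X) \otimes_{\infty} \Image(\varphi^{\sharp})$ in $C^{\infty}$-rings. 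I would then apply the universal property of $C^{\infty}(X \times Y)$ as the coproduct $C^{\infty}(X) \otimes_{\infty} C^{\infty}(Y)$ to get a unique $C^{\infty}$-ring surjection $\tilde{\varphi}^{\sharp} : C^{\infty}(X \times Y) \twoheadrightarrow A_{\varphi}$, which fills the diagram of Definition 1.2 and realizes both of its conditions.

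\textbf{Main obstacle.} The hard part will be the foundational assertion used at the start of Step 1: that every finite-${\Bbb R}$-codimensional ideal of $C^{\infty}(Y)$ is a $C^{\infty}$-ring ideal. In the smooth case this is available through Hadamard's lemma and Borel's theorem (any such quotient factors through a jet-space, a finite-dimensional Weil algebra carrying an intrinsic $C^{\infty}$-ring structure), which is precisely the preliminary work set up in Section~2. For finite $k$ the analogous statement demands a $C^k$-division lemma generalizing Malgrange's---not presently available in the required form---which is why I expect the argument to go through only in $C^{\infty}$, with the general $C^k$-case remaining conditional on the division-lemma conjecture recorded in Section~4.
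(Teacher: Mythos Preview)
Your Step~1 is correct but much easier than you present it: in the $C^\infty$ setting \emph{every} ring-theoretic ideal of a $C^\infty$-ring is automatically a $C^\infty$-ideal (this is an immediate consequence of Hadamard's lemma), so $\Image(\varphi^\sharp)\simeq C^\infty(Y)/\ker\varphi^\sharp$ is a $C^\infty$-ring at once and $(1)\Rightarrow(2)$ is essentially trivial. The fiberwise intersection-of-kernels argument is not needed, and the ``foundational assertion'' you single out as the main obstacle is no obstacle at all in the smooth case.

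The genuine gap is in Step~2, and it is precisely where the paper does its real work. You want to invoke the coproduct universal property $C^\infty(X\times Y)=C^\infty(X)\otimes_\infty C^\infty(Y)$ to manufacture $\tilde\varphi^\sharp$, but that universal property only produces maps into $C^\infty$-rings, and $C^\infty(\End_{\Bbb C}(E))$ is not one. Your proposed workaround---form the abstract pushout $P=C^\infty(X)\otimes_\infty\Image(\varphi^\sharp)$ and ``check that the subring $A_\varphi\subset C^\infty(\End_{\Bbb C}(E))$ inherits a $C^\infty$-structure from $P$''---is circular: to identify $P$ with $A_\varphi$ you need a ring map $P\to C^\infty(\End_{\Bbb C}(E))$ extending the two given inclusions, but $P$ is strictly larger than the algebraic tensor product and no such extension comes from the universal property unless the target is already a $C^\infty$-ring. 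If instead you map into the fiberwise product $\prod_{p\in X}B_p$ of Weil algebras (which \emph{is} a $C^\infty$-ring, and into which both $C^\infty(X)$ and $C^\infty(Y)$ do map as $C^\infty$-rings), the universal property hands you $\tilde\varphi^\sharp:C^\infty(X\times Y)\to\prod_p B_p\subset C^{-\infty}(\End_{\Bbb C}(E))$---but this is exactly the paper's Step~(a), and you are then left with the true analytic content of the theorem: showing that $\tilde\varphi^\sharp(f)$ is a \emph{smooth} section of $\End_{\Bbb C}(E)$, not merely a set-theoretic one. That is Step~(b) of the paper's proof, carried out via the spectral subscheme $\varSigma_\varphi$ and the Malgrange Division Theorem, which yields a local normal form expressing $\tilde\varphi^\sharp(f)$ as a polynomial in $\varphi^\sharp(y^1),\dots,\varphi^\sharp(y^n)$ with $C^\infty(X)$-coefficients. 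So Malgrange does enter, but not where you anticipated: it is needed in Step~2, not Step~1, and without it Definition~1.2 cannot be verified because $\tilde\varphi^\sharp$ is required to land in $C^\infty(\End_{\Bbb C}(E))$.
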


\bigskip

Thus, if justified,
 any of the $\varphi^{\sharp}$ in Statements (1), (2), or (3) above
 can be used to define the notion of a $C^k$-map $\varphi:(X^{\!A\!z},E)\rightarrow Y$;
cf.\ [L-Y2: Sec.\ 5] (D(11.1)).
The resulting notions are the same/equivalent.

\bigskip

Next, we recall the notion of `nilpotency' in three situations  and  then
 bring forth the second conjecture of the current note.

\bigskip

\begin{sdefinition}{\bf [nilpotency].} \rm
   We define the notion of {\it nilpotency} in three situations.
   \begin{itemize}
     \item[(1)]
      Let $a\in R$  be a nilpotent element of a ring (commutative or not).
	  We say $a$ has nilpotency $\le l\in{\Bbb Z}_{\ge 1}$ if $a^l=0$.
	  The minimal such $l$ is called the {\it nilpotency} of $a$.

     \item[(2)]
      Let $R$ be a ring.
	  We say that $R$ has nilpotency $\le l$ if $a^l=0$ for all nilpotent elements of $R$.
	  The minimal such $l$ is called the {\it nilpotency} of $R$.
 	
	 \item[(3)]
      Let $m\in M_{r\times r}({\Bbb C})$ be an $r\times r$-matrix with entries in ${\Bbb C}$.
	  We say that $m$ has nilpotency $\le l$ if each elementary Jordan block of $m$ 	
	 {\scriptsize
	    $$
	     \left[
		   \begin{array}{ccccc}
		     \lambda  & 1 \\
			    & \lambda  & \ddots\\
		        & & \ddots & & 1 \\
			 &&&& \lambda
		   \end{array}
		 \right]_{l^{\prime}\times l^{\prime}}\,,	
	    $$}with
	  all entries not on the diagonal nor on the first upper off-diagonal being equal to zero,
	  has the size $l^{\prime}\le l$.	
	  The minimal such $l$ is called the {\it nilpotency} of $m$. 	
   \end{itemize}
\end{sdefinition}

\medskip

\begin{sconjecture} {\bf [$C^k$-map to ${\Bbb R}^n$].}
 Let
  $X$ be a $C^k$-manifold  and $E$ be a complex $C^k$ vector bundle of rank $r$ on $X$.
 Let
  $(y^1,\,\cdots\,, y^n)$ be a global coordinate system on ${\Bbb R}^n$, as a $C^k$-manifold, and
  $$
    \eta\;:\; y^i\; \longmapsto\; m_i\,\in\, C^k(\End_{\Bbb C}(E))\,,\;\;
	i\;=\;1,\,\ldots\,,n\,,
  $$
 be an assignment  such that
  \begin{itemize}
   \item[(1)]
     $\;m_im_j\;=\;m_jm_i$, for all $i,\,j\,$;

   \item[(2)]
    for every $p\in X$,
	 the eigenvalues of the restriction
	   $m_i(p)\in \End_{\Bbb C}(E|_p)\simeq M_{r\times r}({\Bbb C})$
	   are all real;
	
   \item[(3)]
    for every $p\in X$,
	 the nilpotency of $m_i(p)$ $\le k+1$.
 \end{itemize}
 Then,
  $\eta$ extends to a unique $C^k$-admissible ring-homomorphism
  $$
    \varphi_{\eta}^{\sharp}\;:\;
	 C^k({\Bbb R}^n)\; \longrightarrow\; C^k(\End_{\Bbb C}(E))
  $$
  over ${\Bbb R}\hookrightarrow{\Bbb C}$ and. hence,
  defines a $C^k$-map $\varphi_{\eta}:(X^{\!A\!z},E)\rightarrow Y$.
\end{sconjecture}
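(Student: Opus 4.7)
The plan is to build $\varphi_\eta^\sharp$ pointwise from Section~2 and then promote the pointwise data to a $C^k$ global object via a Malgrange-type division argument. First, at every $p\in X$, conditions~(1)--(3) restricted to $p$ reduce to the hypotheses of Conjecture~1.5 for $X$ a point, so by Section~2 the ${\Bbb R}$-subalgebra $R_p := {\Bbb R}\langle m_1(p),\ldots,m_n(p)\rangle \subset \End_{\Bbb C}(E|_p)$ carries a canonical $C^k$-ring structure and admits a unique $C^k$-ring homomorphism $\mathrm{ev}_p: C^k({\Bbb R}^n) \to R_p$ sending $y^i \mapsto m_i(p)$. Define $\varphi_\eta^\sharp(f)(p) := \mathrm{ev}_p(f)$; what remains is to show that this section of $\End_{\Bbb C}(E)$ is of class $C^k$, that $\varphi_\eta^\sharp$ is a ring-homomorphism, that it is $C^k$-admissible, and that it is unique.

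For the $C^k$-regularity, I work locally near $p_0\in X$ with $E$ trivialized over a neighborhood $U$, so that each $m_i$ is a matrix with entries in $C^k(U)$. Set $\chi_i(p,y) := \det(y\cdot I - m_i(p)) \in C^k(U)[y]$; each $\chi_i$ is monic of degree $r$, and by condition~(2) splits over ${\Bbb R}$ at $p_0$, while by Cayley--Hamilton $\chi_i(p,m_i(p))=0$. For $f \in C^k({\Bbb R}^n)$, view $f$ as a function on $U\times {\Bbb R}^n$ independent of the $U$-factor and apply an iterated Malgrange-type preparation/division theorem with respect to the Weierstrass polynomials $\chi_1,\ldots,\chi_n$: this yields, locally on $U\times V$ near each joint eigenvalue $\lambda^{(\ell)} = (\lambda_1^{(\ell)},\ldots,\lambda_n^{(\ell)})$ of $(m_1(p_0),\ldots,m_n(p_0))$, a decomposition
\[
  f(y)\; =\; \sum_{|\alpha|\le k}\, a_\alpha(p)\,(y - \lambda^{(\ell)})^{\alpha}\; +\; \sum_{i=1}^n \chi_i(p,y^i)\, q_i(p,y)
\]
with $a_\alpha,\, q_i \in C^k$. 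Substituting $y^i \mapsto m_i(p)$ annihilates the $\chi_i$-terms and leaves a polynomial in the shifted generators $m_i - \lambda_i^{(\ell)} I$ with $C^k(U)$-coefficients; a partition of unity on ${\Bbb R}^n$ over the (finite) joint spectrum of $(m_1(p_0),\ldots,m_n(p_0))$ patches these local expressions and shows $\varphi_\eta^\sharp(f)|_U$ is of class $C^k$. Condition~(3) enters precisely here: the bound on Jordan-block size of each $m_i(p)$ is what forces the truncation $|\alpha|\le k$ in the remainder, so that only derivatives $\partial^{\alpha} f$ of order $\le k$ --- which exist and are continuous for $f\in C^k$ --- appear in the coefficients $a_\alpha$.

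The ring-homomorphism property of $\varphi_\eta^\sharp$ is immediate from that of each $\mathrm{ev}_p$. For $C^k$-admissibility in the sense of Definition~1.2, define the diagonal extension $\tilde{\varphi}_\eta^\sharp : C^k(X\times {\Bbb R}^n) \to C^k(\End_{\Bbb C}(E))$ by applying the same pointwise-plus-division recipe to $F\in C^k(X\times {\Bbb R}^n)$ (viewed as a $C^k$-function of $y$ with $p$-dependent coefficients): the local expression above extends mutatis mutandis, and $A_\varphi := \mathrm{Image}(\tilde{\varphi}_\eta^\sharp)$ inherits a canonical $C^k$-quotient ring structure because the Malgrange remainder realizes $A_\varphi|_U$ as a $C^k(U)$-subalgebra finitely generated by $m_1,\ldots,m_n$. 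The commutativity of the diagram in Definition~1.2 is built into the construction, and uniqueness of $\varphi_\eta^\sharp$ follows from pointwise uniqueness (Section~2) together with the continuity supplied by the local polynomial formula.

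The chief obstacle is the multivariable division step in the finite $C^k$ case. In the $C^\infty$ case, Malgrange's classical smooth preparation/division theorem applies unconditionally and condition~(3) becomes automatic (since $k+1=\infty$), which fully establishes the smooth case treated in Section~3.2. In finite $C^k$, the required $C^k$-division statement --- a finitely-differentiable refinement of Malgrange's theorem --- is itself conjectural (the subject of Section~4); conditional on such a lemma, the blueprint above runs through, with condition~(3) playing its essential role of bounding the degree of the remainder polynomial so that the coefficients $a_\alpha$ involve only those derivatives of $f$ that actually exist.
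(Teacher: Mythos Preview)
Your proposal is correct for the $C^{\infty}$ case and follows essentially the same route as the paper: build $\varphi_\eta^\sharp$ pointwise via Section~2 (landing a priori in $C^{-\infty}(\End_{\Bbb C}(E))$), then apply Malgrange division against the characteristic polynomials $\chi_i(p,y)=\det(y\cdot I-m_i(p))$ to write $f$ locally as a $C^\infty(U)$-polynomial in the $y^i$ modulo the ideal $(\chi_1,\ldots,\chi_n)$, patch via cutoff/partition of unity, and read off $C^\infty$-admissibility. You also correctly identify, as the paper does in Section~4, that the finite-$C^k$ case hinges on a conjectural $C^k$-division lemma; your heuristic for where Condition~(3) should enter is reasonable, though note that the Malgrange remainder degree is governed by the regularity of $\chi_i$ (a characteristic-polynomial multiplicity) rather than directly by the Jordan-block size, so the precise mechanism in finite $k$ remains to be pinned down.
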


\bigskip

Note that
  the set of conditions (1), (2), and (3) are necessary
    for $\eta$ to be extendable to a ring-homomorphism
    $C^k(Y)\rightarrow C^k(\End_{\Bbb C}(E))$;
  cf.\ [L-Y2: Sec.~3] (D(11.1)).
This conjecture rings with the fact that
 any $C^k$-map $f:Z\rightarrow {\Bbb R}^k$ from a $C^k$-manifold $Z$ to ${\Bbb R}^n$
  is specified by the component maps $f_i:=  \pr_i\circ f:Z\rightarrow {\Bbb R}$,
  where $\pr_i:{\Bbb R}^n\rightarrow {\Bbb R}$
   is the projection map to the $i$-th coordinate of ${\Bbb R}^n$, $1\le i\le n$.

\bigskip

Similarly, there are also the fermionic/super version of these conjectures,
 which would give, in particular, equivalent notions of
 $C^k$-maps from an Azumaya/matrix super $C^k$-manifold to a $C^k$-manifold or super $C^k$-manifold
 to that defined in [L-Y3: Sec.\ 4] (D(11.2)).

\bigskip

The goal of the current note is to prove Conjecture~1.3 and Conjecture~1.5 in the case $k=\infty$.
Their fermionic/super version hold similarly.
Before that, let us take a look at the preliminary case when $X$ is a point.

\bigskip

\section{Preliminaries: When $X$ is a point}	

For general $k\in {\Bbb Z}_{\ge 0}\cup\{\infty\}$,
 we examine and prove Conjecture~1.3 and Conjecture~1.5 for the special case that $X$ is a point.
For simplicity of notation,
 some of the explicit expressions in the discussion are meant for $k$ being finite;
 for example, Taylor polynomials at a point or infinitesimal neighborhoods around a point.
They can be readily converted to the case $k={\infty}$
 (by restricting the Taylor polynomial to degree $r$
     or the nilpotency of the infinitesimal neighborhood to $r$).

\bigskip

\begin{flushleft}
{\bf The canonical $C^k$-ring structure on a commutative finite-dimensional ${\Bbb R}$-algebra}
\end{flushleft}

\begin{slemma}
{\bf [canonical $C^k$-ring structure on finite-dimensional ${\Bbb R}$-algebra].}	
 Let $A$ be a commutative finite-dimensional ${\Bbb R}$-algebra of nilpotency $\le l$.
 Then, for all $k\ge l-1$,
   $A$ admits a canonical $C^k$-ring structure
   that is compatible with the underlying ring structure of $A$.
\end{slemma}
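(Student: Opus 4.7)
The plan is to reduce to the local case via the structure theorem for finite-dimensional commutative $\mathbb{R}$-algebras, and then build the $C^k$-ring operations explicitly by Taylor expansion around the residue field. First I would decompose $A\cong\prod_{i=1}^{N} A_i$ with each $A_i$ a local Artinian $\mathbb{R}$-algebra of nilpotency $\le l$, maximal ideal $\mathfrak{m}_i$, and residue field $A_i/\mathfrak{m}_i$ a finite extension of $\mathbb{R}$; since $C^k$-ring operations distribute coordinatewise over finite direct products, it suffices to endow each local factor with the structure separately. In the case most relevant to the paper, where the residue field is $\mathbb{R}$ (cf.\ Conjecture 1.5(2)), the unit map $\mathbb{R}\hookrightarrow A_i$ furnishes a canonical splitting $A_i=\mathbb{R}\cdot 1\oplus\mathfrak{m}_i$, so each $a\in A_i$ decomposes uniquely as $a=s+\nu$ with $s\in\mathbb{R}$ and $\nu\in\mathfrak{m}_i$.

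The key algebraic fact I would establish before writing down the operations is that, in any commutative $\mathbb{R}$-algebra of nilpotency $\le l$, one actually has $\mathfrak{m}_i^{\,l}=0$, not merely $\nu^l=0$ for each individual $\nu\in\mathfrak{m}_i$. This is proved by applying nilpotency to $\nu(t):=t_1\nu_1+\cdots+t_l\nu_l$ for real parameters $t_j\in\mathbb{R}$ and elements $\nu_j\in\mathfrak{m}_i$: since $\nu(t)^l=0$ for every $t\in\mathbb{R}^l$ and the multinomial expansion in the $t_j$'s has nonzero integer coefficients $\binom{l}{\alpha}$, characteristic zero forces each $\nu_1^{\alpha_1}\cdots\nu_l^{\alpha_l}=0$ for $|\alpha|=l$; specialising to $\alpha=(1,\ldots,1)$ yields $\nu_1\cdots\nu_l=0$. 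This identifies the nilpotency with the Loewy length of $\mathfrak{m}_i$, and is exactly what makes $k\ge l-1$ the correct threshold. Given this, for $f\in C^k(\mathbb{R}^n)$ and $(a_1,\ldots,a_n)\in A_i^n$ with $a_j=s_j+\nu_j$, I define
$$
 \Phi_f(a_1,\ldots,a_n)\;:=\;\sum_{|\alpha|<l}\frac{\partial^{\alpha}f(s_1,\ldots,s_n)}{\alpha!}\,\nu_1^{\alpha_1}\cdots\nu_n^{\alpha_n},
$$
a finite sum involving only partial derivatives of $f$ of total order $\le l-1$, hence well-defined from the data $f\in C^k(\mathbb{R}^n)$ with $k\ge l-1$.

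The remaining task is to verify that $\Phi$ satisfies the $C^k$-ring axioms and is the unique such structure extending the given $\mathbb{R}$-algebra structure on $A$. That $\Phi$ recovers addition, multiplication, and the constants is immediate from the Taylor formula. The main obstacle, and the one place where real work is required, is the naturality axiom $\Phi_{f\circ g}=\Phi_f\circ(\Phi_{g_1},\ldots,\Phi_{g_m})$ for composable $C^k$-maps $f\in C^k(\mathbb{R}^m,\mathbb{R})$ and $g=(g_1,\ldots,g_m)\in C^k(\mathbb{R}^n,\mathbb{R}^m)$: I would check it by writing out both sides as truncated Taylor polynomials and matching term by term via the classical Fa\`{a} di Bruno formula, observing that all higher-order remainder contributions live in $\mathfrak{m}_i^{\,l}=0$ and therefore vanish identically. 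Uniqueness then follows from iterating Hadamard's lemma $f(x)=f(s)+\sum_j(x_j-s_j)h_j(x)$ with $h_j\in C^{k-1}$: any $C^k$-ring structure $\Phi'$ extending the underlying ring operations is forced to satisfy $\Phi'_f(a)=f(s)+\sum_j\nu_j\,\Phi'_{h_j}(a)$, and iterating this relation $l$ times terminates since the $\mathfrak{m}_i^{\,l}$-contribution vanishes, yielding precisely the formula for $\Phi_f$.
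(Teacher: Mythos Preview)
Your proposal is correct and follows the same route as the paper: factor $A$ into local (Weil) pieces and define the $C^k$-operations by Taylor expansion about the residue-field part. You are in fact more careful than the paper on two points it glosses over---you prove $\mathfrak{m}_i^{\,l}=0$ by polarization (the paper's Definition~1.4 gives only elementwise nilpotency, and this strengthening is what actually makes the composition axiom go through), and you flag that the construction needs residue field $\mathbb{R}$, which the paper's ``$\alpha:A\to\mathbb{R}$'' assumes without comment.
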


\begin{proof}
 Since $A$ factorizes into a product
   $$
     A = A_1\times\,\cdots\,\times A_s
   $$
  of Weil algebras
    (i.e.\ commutative finite-dimensional ${\Bbb R}$-algebra with a unique maximal ideal)
  that is unique up to a permutation of the factors
  and  a product of $C^k$-rings admits a canonical $C^k$-ring structure from the factors,
 without loss of generality we may assume that  $A$ is a Weil algebra.

 In this case, there is a built-in ${\Bbb R}$-algebra quotient
   $$
     \alpha\; :\;  A\; \longrightarrow\; {\Bbb R}
   $$
   whose kernel is the maximal ideal ${\frak m}$ of $A$.
 Together with the built-in inclusion ${\Bbb R}\hookrightarrow A$  for any ${\Bbb R}$-algebra,
  one has a sequence of ${\Bbb R}$-algebra homomorphisms
  $$
   \xymatrix{
    {\Bbb R}\;  \ar@{^{(}->}[r]   & \;A\; \ar[r]^-{\alpha}    & \;{\Bbb R}
	}
  $$
  with the composition the identity homomorphism on ${\Bbb R}$.
 This gives a canonical splitting
  $$
    A\;=\; {\Bbb R}\oplus {\frak m}
  $$
  as ${\Bbb R}$-vector spaces, with ${\frak m}$
      identical with the nil-ideal of $A$ of nilpotency $\le l$.

 Let $h\in C^k({\Bbb R}^n)$  for any $n\in {\Bbb Z}_{\ge 1}$.
 Then,
  for any $a_1$, $\cdots$, $a_n\in A$,
  define  $h(a_1,\,\cdots\,,\,a_n)$ by setting
  $$
    h(a_1,\,\cdots\,,\,a_n)\; =\;
	  \sum_{s=0}^k\,\frac{1}{s!}\,
	    \sum_{d_1+\,\cdots\,+d_n=s}
	     \partial_1^{\,d_1}\,\cdots\,\partial_n^{\,d_n}  h(b_1,\,\cdots\,,\, b_n)\,
		  c_1^{d_1}\,\cdots\,c_n^{d_n}\,,	
  $$
  where
    \begin{itemize}
	  \item[\LARGE $\cdot$]
       $a_i=b_i+c_i$, $i=1,\,\ldots\,,n$,
        is the decomposition of $a_i$ according to $A={\Bbb R}\oplus {\frak m}$;
		
	  \item[\LARGE $\cdot$]	
	  	$\partial_1^{\,d_1}\,\cdots\,\partial_n^{\,d_n}  h$
       is the partial derivative of $h$ with respect to the first variable $d_1$-times, the second variable $d_2$-times,
	    ..., and the $n$-th variable $d_n$-times.
	\end{itemize}	
 Notice that $(b_1,\,\cdots\,,\,b_n)\in {\Bbb R}^n$.
 Thus,
  while
    $$
      \partial_1^{\,d_1}\,\cdots\,\partial_n^{\,d_n}  h\,, \hspace{2em}
         d_1,\,\cdots\,,\,d_n\in {\Bbb Z}_{\ge 0}\,, \;\; d_1+\cdots+d_n=s\,,
    $$		
    in general lie only in $C^{k-s}({\Bbb R}^n)$,
  the evaluation
   $\partial_1^{\,d_1}\,\cdots\,\partial_n^{\,d_n}  h (b_1,\,\cdots\,,\,b_n)$
   remains well-defined,
  as is required for $h\in C^k({\Bbb R}^n)$.

 This defines the canonical $C^k$-ring structure on $A$.
 Clearly, it is compatible with the underlying ring structure of $A$.

\end{proof}

\bigskip

The following two lemmas follow by construction.
They are indications
  that the canonical $C^k$-ring structure on a finite-dimensional ${\Bbb R}$-algebra, when exists,
    is functorial/natural.

\bigskip

\begin{slemma} {\bf [$\Bbb R$-algebra homomorphism vs.\ $C^k$-ring homomorphism, I].}
 Let $A$ and $B$
   be commutative finite-dimensional ${\Bbb R}$-algebras with both of nilpotency $\le k+1$.
 Then
  $$
    \Hom_{\mbox{\scriptsize\it ${\Bbb R}$-${\cal A}$lgebras}}(B,A)\;
	 =\;   \Hom_{\mbox{\scriptsize\it $C^k$-${\cal R}$ings}}(B,A)
  $$
  with respect to the canonical $C^k$-ring structure on $A$ and $B$ respectively.
\end{slemma}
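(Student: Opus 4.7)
The inclusion $\Hom_{\mbox{\scriptsize\it $C^k$-${\cal R}$ings}}(B,A)\subseteq \Hom_{\mbox{\scriptsize\it ${\Bbb R}$-${\cal A}$lgebras}}(B,A)$ is automatic, since addition, multiplication, and multiplication by each real scalar all lie in $C^k({\Bbb R}^n)$ for appropriate $n$, and any $C^k$-ring homomorphism must preserve them. The content of the lemma is the reverse inclusion, and my plan is to derive it by direct naturality of the Taylor-expansion formula used in the proof of Lemma 2.1 to define the canonical $C^k$-ring structure.

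First I would reduce to the Weil-algebra case. Writing $A=A_1\times\cdots\times A_s$ and $B=B_1\times\cdots\times B_t$ as products of Weil algebras, the canonical $C^k$-ring structures on $A$ and $B$ are the product $C^k$-ring structures, so it suffices to check that each component $\phi_i=\pi_i\circ\phi:B\to A_i$ of a given ${\Bbb R}$-algebra homomorphism $\phi:B\to A$ is a $C^k$-ring homomorphism. Since a Weil algebra has only the trivial idempotents $0$ and $1$, and $\phi_i$ sends the orthogonal idempotents of $B$ to orthogonal idempotents summing to $1$ in $A_i$, exactly one factor $B_{j(i)}$ maps nontrivially; the remaining idempotents go to $0$. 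Hence $\phi_i$ factors through the $C^k$-ring projection $B\to B_{j(i)}$ followed by an ${\Bbb R}$-algebra homomorphism $\psi:B_{j(i)}\to A_i$ between Weil algebras, so the problem reduces to such $\psi$.

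Assuming next that $A$ and $B$ are Weil algebras with canonical splittings $A={\Bbb R}\oplus{\frak m}_A$ and $B={\Bbb R}\oplus{\frak m}_B$, I would observe that any ${\Bbb R}$-algebra homomorphism $\phi:B\to A$ preserves these splittings: it fixes the ${\Bbb R}$-summand because $\phi(c)=c\cdot\phi(1_B)=c$ for $c\in{\Bbb R}$, and it maps ${\frak m}_B$ into ${\frak m}_A$ because it sends nilpotents to nilpotents (and in a Weil algebra the maximal ideal coincides with the nilradical). Thus, writing $b_i=b_i^{\Bbb R}+b_i^{\frak m}$, one has $\phi(b_i)=b_i^{\Bbb R}+\phi(b_i^{\frak m})$ as the canonical decomposition of $\phi(b_i)$ in $A$. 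Applying $\phi$ to the explicit formula of Lemma 2.1,
\[
  h_B(b_1,\ldots,b_n)\;=\;\sum_{s=0}^{k}\frac{1}{s!}\sum_{d_1+\cdots+d_n=s}\partial_1^{d_1}\cdots\partial_n^{d_n}h(b_1^{\Bbb R},\ldots,b_n^{\Bbb R})\,(b_1^{\frak m})^{d_1}\cdots(b_n^{\frak m})^{d_n}\,,
\]
and using ${\Bbb R}$-linearity and multiplicativity of $\phi$ term by term, I would recognize the result as the corresponding expression $h_A(\phi(b_1),\ldots,\phi(b_n))$ on the $A$-side, thereby confirming $\phi(h_B(b_1,\ldots,b_n))=h_A(\phi(b_1),\ldots,\phi(b_n))$ for every $n$ and every $h\in C^k({\Bbb R}^n)$.

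No step is a genuine obstacle; the only point requiring care is the bookkeeping of the nilpotency bound. The hypothesis that $A$ and $B$ both have nilpotency $\le k+1$ ensures that all monomials $(b_1^{\frak m})^{d_1}\cdots(b_n^{\frak m})^{d_n}$ with $d_1+\cdots+d_n\ge k+1$ vanish on both sides, so the truncation of the Taylor sum at $s=k$---needed because only $k$-fold derivatives of $h$ are available---is consistent, and the matching of the two sides under $\phi$ holds identically.
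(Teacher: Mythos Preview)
Your proposal is correct and is precisely the unpacking of what the paper means when it says that this lemma ``follows by construction'': you reduce to the Weil-algebra case exactly as in the proof of Lemma~2.1, observe that an ${\Bbb R}$-algebra homomorphism respects the splitting ${\Bbb R}\oplus{\frak m}$, and then apply $\phi$ term by term to the explicit Taylor formula defining the canonical $C^k$-ring structure. The paper gives no further argument beyond this remark, so your write-up simply supplies the routine details the paper leaves to the reader.
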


\bigskip

\begin{slemma} {\bf [$\Bbb R$-algebra homomorphism vs.\ $C^k$-ring homomorphism, II].}
 Let
   $A$ be a commutative finite-dimensional ${\Bbb R}$-algebra of nilpotency $\le k+1$  and
   $Y$ be a $C^k$-manifold.
 Then
  $$
    \Hom_{\mbox{\scriptsize\it ${\Bbb R}$-${\cal A}$lgebras}}(C^k(Y),A)\;
	 =\;   \Hom_{\mbox{\scriptsize\it $C^k$-${\cal R}$ings}}(C^k(Y),A)
  $$
  with respect to the canonical $C^k$-ring structure on $A$.
\end{slemma}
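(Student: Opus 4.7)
The plan is to verify that every $\Bbb R$-algebra homomorphism $\varphi : C^k(Y) \to A$ is automatically a $C^k$-ring homomorphism with respect to the canonical $C^k$-ring structure on $A$ from Lemma~2.1; the reverse inclusion is tautological. First, using the factorization $A = A_1\times\cdots\times A_s$ into Weil algebras from the proof of Lemma~2.1 and the compatibility of the canonical $C^k$-structure with such products, I would reduce immediately to the case that $A = \Bbb R\oplus{\frak m}_A$ is itself a Weil algebra with ${\frak m}_A^{k+1} = 0$. In this setting $\alpha\circ\varphi : C^k(Y)\to\Bbb R$ is an $\Bbb R$-algebra homomorphism, and by the standard fact that every $\Bbb R$-algebra character of the $C^k$-function ring of a paracompact Hausdorff manifold is evaluation at a unique point, one has $\alpha\circ\varphi = \ev_p$ for a unique $p\in Y$. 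Writing $\varphi(f) = f(p) + \epsilon_f$ with $\epsilon_f\in{\frak m}_A$ gives $\varphi({\frak m}_p)\subseteq{\frak m}_A$ and hence $\varphi({\frak m}_p^{k+1})\subseteq{\frak m}_A^{k+1}=0$.

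To verify the $C^k$-operation identity, given $h\in C^k(\Bbb R^n)$ and $f_1,\ldots,f_n\in C^k(Y)$, I would compare $\varphi(h(f_1,\ldots,f_n))$ with
\[
   h(\varphi(f_1),\ldots,\varphi(f_n)) \;=\; \sum_{|d|\le k}\frac{(\partial^d h)(f(p))}{d_1!\cdots d_n!}\,\epsilon_{f_1}^{d_1}\cdots\epsilon_{f_n}^{d_n},
\]
a finite sum by the nilpotency assumption. Let $P\in C^k(Y)$ denote the polynomial in the $f_i$ obtained by substituting $f_i-f_i(p)$ for $\epsilon_{f_i}$ in the right-hand side. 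Because $\varphi$ is an $\Bbb R$-algebra homomorphism, $\varphi(P)$ equals that right-hand side verbatim, and so the desired identity collapses to $\varphi(R)=0$ for the Taylor remainder $R := h(f_1,\ldots,f_n)-P\in C^k(Y)$. This in turn is guaranteed by the inclusion $R\in{\frak m}_p^{k+1}$.

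The main obstacle is precisely this last inclusion, and it is the single point at which the value of $k$ enters essentially. In the smooth case $k=\infty$, the iterated Hadamard lemma expresses $h$ minus its degree-$k$ Taylor polynomial at $f(p)$ as a sum of $(k+1)$-fold products of $C^\infty$ functions each vanishing at $f(p)$; pulling back along $(f_1,\ldots,f_n):Y\to\Bbb R^n$ places $R$ in ${\frak m}_p^{k+1}$ and the proof closes. For a general finite $k$ the analogous decomposition of the Taylor remainder into a sum of $(k+1)$-fold products of $C^k$ functions vanishing at $p$ is precisely the content of the $C^k$-division lemma conjectured in Section~4; granted that division lemma, the argument above carries over without change. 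This is the sole nontrivial analytic ingredient, and it explains why the $C^\infty$ case succeeds cleanly while the finitely differentiable case is contingent on further analytic work.
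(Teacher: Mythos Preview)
Your argument is correct and considerably more detailed than the paper's, which disposes of Lemma~2.3 together with Lemma~2.2 in the single phrase ``follow by construction.'' Your reduction to the Weil-algebra case, the identification of the base point $p$ via $\alpha\circ\varphi$, and the reduction of the $C^k$-identity to $\varphi(R)=0$ are all sound; for $k=\infty$ the iterated Hadamard (equivalently, integral Taylor) argument does place the remainder in the required power of $\mathfrak m_p$, with the understanding that one truncates at the actual nilpotency of $A$ rather than literally at degree~$\infty$. Your diagnosis of the finite-$k$ obstacle is accurate and in keeping with the paper's own Section~4.

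One point worth adding: the paper's subsequent Lemma~2.4(2), quoted from [L-Y2], asserts that any such ring-homomorphism factors through the finite Taylor-jet map $T_p^{(k)}$, and the paper uses precisely this factorization---together with Lemma~2.2 and the observation (chain rule for jets) that $T_p^{(k)}$ is itself a $C^k$-ring homomorphism---to obtain Corollary~2.5. Granting that factorization, one needs only that $R$ has vanishing $k$-jet at $p$, which is immediate, rather than the stronger inclusion $R\in\mathfrak m_p^{k+1}$ that your direct approach requires. So for finite $k$ there is an alternative route which shifts the analytic burden onto the factorization claim; whether \emph{that} claim is itself elementary for finite $k$, or again hides a division-type ingredient, is a question the present paper defers to its reference.
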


\bigskip

\begin{flushleft}
{\bf Validity of Conjecture~1.3 when $X$ is a point}    
\end{flushleft}
The following lemma follows from [L-Y2: Sec.\ 3] (D(11.1)):

\bigskip

\begin{slemma}{\bf [ring-homomorphism to matrix algebra].}
  Given a $C^k$-manifold $Y$,
   let
     $$
	    \varphi^{\sharp}\; :\;  C^k(Y)\; \longrightarrow\;  M_{r\times r}({\Bbb C})
     $$
     be a ring-homomorphism over ${\Bbb R}\hookrightarrow{\Bbb C}$.
  Then
   \begin{itemize}
     \item[(1)]
	   For all $f\in C^k(Y)$, $\varphi^{\sharp}(f)\in M_{r\times r}({\Bbb C})$
	   has only real eigenvalues.
	
	 \item[(2)]
      $\varphi^{\sharp}$ factors through a finite-Taylor-expansion map
	  at a finite set $\{q_1,\,\cdots\,,\,q_s\}$ of $Y$
      $$
	    \xymatrix{
	     C^k(Y) \ar[rr]^-{\varphi^{\sharp}}
	           \ar[d]_-{\oplus_{j=1}^sT_{q_j}^{(k)} }
	       && M_{r\times r}({\Bbb C})    \\
         \oplus_{j=1}^s
	          \frac{{\Bbb R}[y_j^1,\,\cdots\,, y_j^n] }
	                   {(y_j^1,\,\cdots\,,\, y_j^n)^{k+1}}
	 	    	\ar[rru]_-{\underline{\varphi}^{\sharp}}    &&&,
	    }
       $$	
  	   where
	   \begin{itemize}
	    \item[$\cdot$]
		 $(y_j^1,\,\cdots\,,\, y_j^n)$ is a local coordinate system in a neighborhood of $q_i\in Y$
		   with the coordinates of $q_j$ all $0$,
		
		\item[$\cdot$]
		 $T_{q_j}^{(k)}$ is the map
		    `taking Taylor polynomial (of elements in $C^k(Y)$) at $q_j$
		      with respect to $(y_j^1,\,\cdots\,,\, y_j^n)$ up to and including degree $k$', and
			
	    \item[$\cdot$]		
		 $\underline{\varphi}^{\sharp}$ is an (algebraic) ring-homomorphism
		 over ${\Bbb R}\subset {\Bbb C}$.		
	   \end{itemize}
	 	
	 \item[(3)]
      Nilpotency of $\varphi^{\sharp}(C^k(Y))$ is bounded by 	$\min\{k+1, r\}$. 	
   \end{itemize}
\end{slemma}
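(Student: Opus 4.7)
The plan is to follow [L-Y2: Sec.~3] (D(11.1)) and derive all three claims from the structure theory of finite-dimensional commutative ${\Bbb R}$-algebras together with the point-ideal correspondence for $C^k$-rings of manifolds. For (1), the key observation is purely algebraic: for any $\lambda = \alpha + i\beta \in {\Bbb C}$ with $\beta \ne 0$ and any $f \in C^k(Y)$, the real-valued function $(f-\alpha)^2 + \beta^2 \in C^k(Y)$ is everywhere strictly positive, hence a unit in $C^k(Y)$. Applying $\varphi^{\sharp}$ makes $(\varphi^{\sharp}(f) - \lambda\cdot I)(\varphi^{\sharp}(f) - \bar{\lambda}\cdot I)$ invertible in $M_{r\times r}({\Bbb C})$, so $\lambda$ cannot be an eigenvalue of $\varphi^{\sharp}(f)$.

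For (2), note first that $\Image \varphi^{\sharp} \subset M_{r\times r}({\Bbb C})$ is finite-dimensional and commutative (since $C^k(Y)$ is), hence Artinian, and therefore decomposes canonically as a product $\prod_{j=1}^s A_j$ of local Artinian ${\Bbb R}$-algebras. By (1) the residue field of each $A_j$ is forced to be ${\Bbb R}$, so each $A_j$ is a Weil algebra. Pulling back the maximal ideal of $A_j$ along $\varphi^{\sharp}$ yields a finite-codimensional maximal ideal $\mathfrak{m}_j' \subset C^k(Y)$ with $C^k(Y)/\mathfrak{m}_j' \simeq {\Bbb R}$; by the standard correspondence between ${\Bbb R}$-valued ${\Bbb R}$-algebra homomorphisms of $C^k(Y)$ and points of $Y$ (valid for the class of manifolds fixed in the Convention; cf.\ [Jo], [M-R]), one has $\mathfrak{m}_j' = \mathfrak{m}_{q_j}$ for a unique $q_j \in Y$. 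Iterated Hadamard's lemma in a local coordinate chart $(y_j^1,\ldots,y_j^n)$ around $q_j$, combined with the nilpotency bound from (3), then shows that the component of $\varphi^{\sharp}$ with values in $A_j$ kills $\mathfrak{m}_{q_j}^{k+1}$ and therefore factors through the degree-$k$ Taylor-polynomial map $T_{q_j}^{(k)}$. Assembling the $s$ factors gives the claimed factorization diagram.

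For (3), the bound by $r$ is the classical fact that a nilpotent $r \times r$ matrix $m$ satisfies $m^r = 0$. The bound by $k+1$ is read off from (2): each factor ${\Bbb R}[y_j^1,\ldots,y_j^n]/(y_j^1,\ldots,y_j^n)^{k+1}$ has maximal-ideal nilpotency at most $k+1$, so the same holds for $\Image \varphi^{\sharp}$.

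The main obstacle lies in step (2) --- specifically, ensuring that every finite-codimensional maximal ideal of $C^k(Y)$ with residue field ${\Bbb R}$ is genuinely a point-ideal, not a ``wild'' one. This relies on the existence of $C^k$ bump functions together with the assumption (part of our Convention) that $Y$ embeds as a closed $C^k$-submanifold of some ${\Bbb R}^N$; these together are exactly what is needed to force $\mathfrak{m}_j'$ to be evaluation at a point $q_j \in Y$.
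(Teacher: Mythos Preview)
The paper itself offers no proof beyond the citation to [L-Y2: Sec.~3], so there is no detailed argument to compare against; your sketch is in the right spirit and captures the expected structure (Artin decomposition of the image, identification of the residue points $q_j$, then factorization through Taylor jets).

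There is, however, a genuine circularity in the way you have organized (2) and (3). In your argument for (2) you invoke ``the nilpotency bound from (3)'' to conclude that $\varphi^{\sharp}_j$ kills $\mathfrak{m}_{q_j}^{k+1}$, and then in (3) you say the $k+1$ bound ``is read off from (2)''. If by ``the nilpotency bound from (3)'' you mean only the $r$-bound (which is independent), then you obtain $\varphi^{\sharp}_j(\mathfrak{m}_{q_j}^{\,r})=0$, not $\varphi^{\sharp}_j(\mathfrak{m}_{q_j}^{\,k+1})=0$; this suffices to factor through the degree-$k$ Taylor map only when $r\le k+1$. When $k=\infty$ (or more generally $r\le k+1$) the circularity is harmless, since $\min\{k+1,r\}=r$ and the $r$-bound alone carries everything. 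But for finite $k$ with $r>k+1$ your sketch leaves a gap: one must prove \emph{independently} that any ${\Bbb R}$-algebra homomorphism $C^k(Y)\to W$ into a Weil algebra annihilates $\ker T^{(k)}_{q}$, i.e.\ that $C^k(Y)$ admits no point-derivations of order exceeding $k$.

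This missing step is not automatic from iterated Hadamard: in $C^k$ each application of Hadamard loses a derivative, so after $k$ iterations the coefficient functions are only $C^0$ and lie outside the domain of $\varphi^{\sharp}$. The argument that actually closes the gap (as in the $C^0$ case, where one uses $f_{\pm}=(\sqrt{f_{\pm}})^2$ to show $\mathfrak{m}_q^2=\mathfrak{m}_q$ and hence that every Weil-algebra quotient is reduced) requires exploiting that $\varphi^{\sharp}$ is defined on \emph{all} of $C^k(Y)$, including suitably constructed auxiliary functions, to rule out higher-order infinitesimals. You should either supply this argument or, as the paper does, point explicitly to where in [L-Y2: Sec.~3] it is carried out, and reorder your exposition so that the $k+1$ bound in (3) is established before it is invoked in (2).
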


\bigskip

\noindent
See {\sc Figure}~2-1 for the contravariant geometry behind;
cf.\ [L-Y2: {\sc Figure}~3-4-1] (D(11.1)).
%

 \begin{figure} [htbp]
  \bigskip
  \centering

  \includegraphics[width=0.60\textwidth]{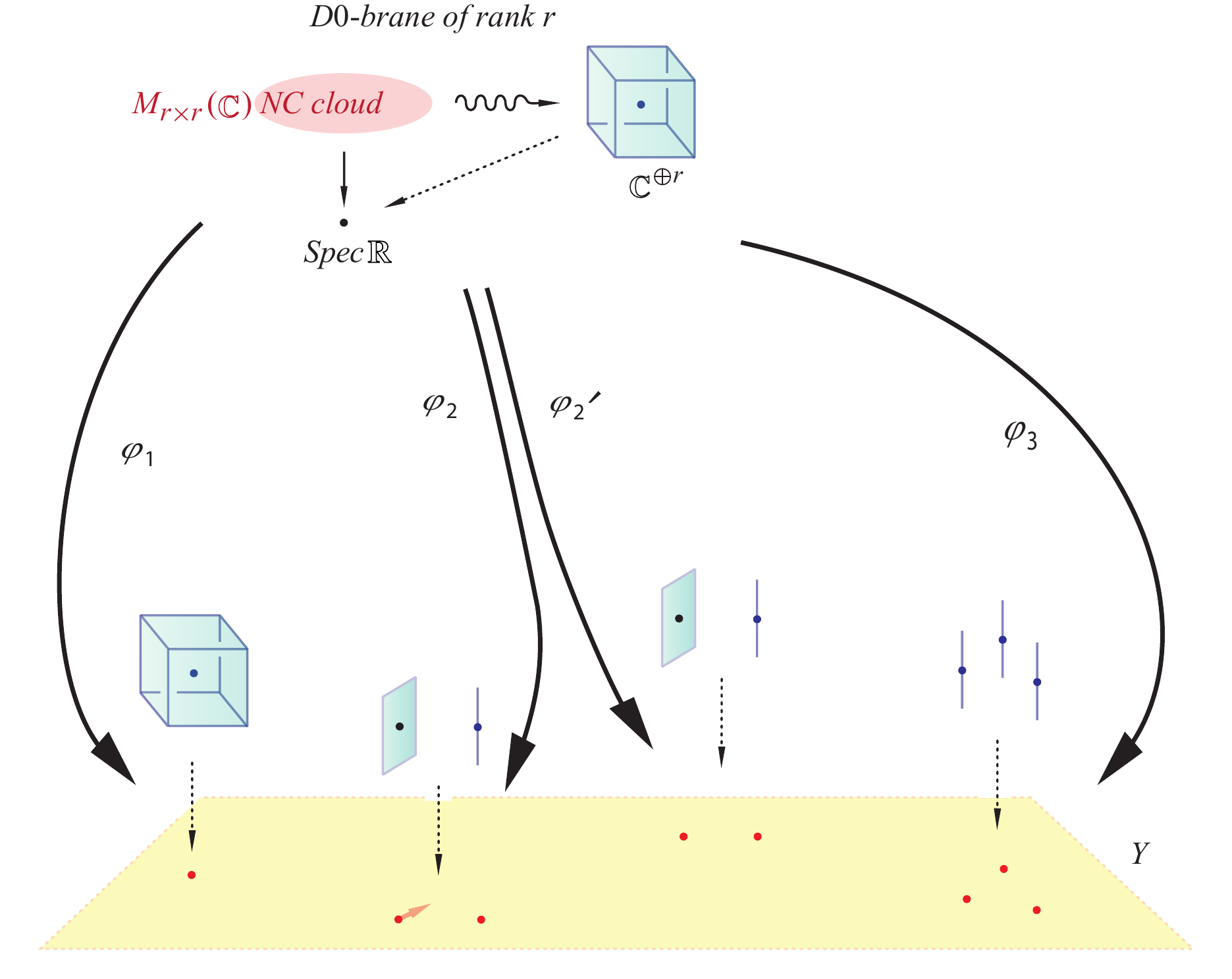}

  \bigskip
  \centerline{\parbox{13cm}{\small\baselineskip 12pt
   {\sc Figure}~2-1.
    Four examples of $C^k$ maps
     $\varphi:(p^{A\!z},{\Bbb C}^{\oplus r})\rightarrow Y$
      from an Azumaya/matrix point with a fundamental module to a $C^k$-manifold $Y$
	  are illustrated.
	The nilpotency of the image scheme $\Image\varphi$ in $Y$ is bounded by $\min\{k+1, r\}$.	
    In the figure, the push-forward of the fundamental module in each example is also indicated.
       }}
  \bigskip
 \end{figure}	

\bigskip

It follows from Lemma~2.1 that
 both
   $\oplus_{j=1}^s
	      \frac{{\Bbb R}[y_j^1,\,\cdots\,, y_j^n] }
	                  {(y_j^1,\,\cdots\,,\, y_j^n)^{k+1}}$  and
   $A_{\varphi}:= \Image(\varphi^{\sharp})\subset M_{r\times r}({\Bbb C})$
   admit a canonical $C^k$-ring structure that is compatible with their underlying ring structure.
In terms of this,
 both ring-homomorphisms $\oplus_{j=1}^sT_{q_j}^{(k)}$ and
  $\underline{\varphi}^{\sharp}$, regarded now as a ring-morphism to $A_{\varphi}$,
  are also $C^k$-ring homomorphisms.
Thus,

\bigskip

\begin{scorollary} {\bf [$\varphi^{\sharp}$ $C^k$-admissible].}
   As a ring-homomorphism to $A_{\varphi}$,
     $\varphi^{\sharp}: C^k(Y)\rightarrow A_{\varphi}$  is a $C^k$-ring homomorphism.
   Thus, as a ring-homomorphism to $M_{r\times r}({\Bbb C})$,
     $\varphi^{\sharp}: C^k(Y)\rightarrow M_{r\times r}({\Bbb C})$
	 is $C^k$-admissible.
\end{scorollary}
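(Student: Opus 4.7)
My approach is to reduce Corollary~2.6 directly to the preceding lemmas, since the conceptual work has already been packaged into Lemma~2.1 (existence of a canonical $C^k$-ring structure on a finite-dimensional ${\Bbb R}$-algebra of nilpotency $\le k+1$), Lemmas~2.2--2.3 (automatic promotion of ${\Bbb R}$-algebra homomorphisms to $C^k$-ring homomorphisms in the appropriate finite-dimensional setting), and Lemma~2.5 (the finite-Taylor factorization through a Weil-algebra product).

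The first step is to observe that $A_\varphi:=\Image\varphi^\sharp$ is a commutative finite-dimensional ${\Bbb R}$-subalgebra of $M_{r\times r}({\Bbb C})$: commutativity because $C^k(Y)$ is commutative, finite-dimensionality because $A_\varphi$ embeds in $M_{r\times r}({\Bbb C})$. By Lemma~2.5(3) its nilpotency is at most $\min\{k+1,r\}\le k+1$. Lemma~2.1 therefore endows $A_\varphi$ with a canonical $C^k$-ring structure compatible with its underlying ring structure. (The same conclusion can equivalently be reached by transporting the canonical $C^k$-ring structure from the Weil-algebra product $B:=\bigoplus_j{\Bbb R}[y_j^1,\ldots,y_j^n]/(y_j^1,\ldots,y_j^n)^{k+1}$ along the $\Bbb R$-algebra surjection $\underline{\varphi}^\sharp: B\twoheadrightarrow A_\varphi$, which is a $C^k$-ring homomorphism by Lemma~2.2.)

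Next, the corestriction $\varphi^\sharp:C^k(Y)\to A_\varphi$ is by construction an ${\Bbb R}$-algebra homomorphism into a finite-dimensional ${\Bbb R}$-algebra of nilpotency $\le k+1$, so Lemma~2.3 promotes it to a $C^k$-ring homomorphism. This establishes the first assertion of the corollary.

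Finally, to obtain the $C^k$-admissibility of $\varphi^\sharp$ viewed inside $C^k(\End_{\Bbb C}(E))=M_{r\times r}({\Bbb C})$, I would exploit that $X$ is a point: then $C^k(X)={\Bbb R}$, $X\times Y=Y$, $pr_X^\sharp:{\Bbb R}\hookrightarrow C^k(Y)$, $pr_Y^\sharp=\id$, and $\tilde{\varphi}^\sharp=\varphi^\sharp$. Condition~(1) of Definition~1.2 is then the $C^k$-ring surjection $\varphi^\sharp:C^k(Y)\twoheadrightarrow A_\varphi$ already in hand, and Condition~(2) reduces to commutativity of the triangle with the canonical inclusion ${\Bbb R}\hookrightarrow A_\varphi$, which is automatic from the underlying ${\Bbb R}$-algebra structure. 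There is no genuine obstacle at any step: Corollary~2.6 is essentially a bookkeeping consequence of Lemmas~2.1, 2.3, and 2.5, with the only substantive work (the functoriality of the canonical $C^k$-ring structure on Weil-type algebras) already absorbed into those preceding lemmas.
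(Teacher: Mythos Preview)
Your proof is correct and matches the paper's own argument essentially line for line: the paper equips both the Weil-algebra product $B=\bigoplus_j{\Bbb R}[y_j^1,\ldots,y_j^n]/(y_j^1,\ldots,y_j^n)^{k+1}$ and $A_\varphi$ with the canonical $C^k$-ring structure from Lemma~2.1, observes that the two factors $\oplus_j T^{(k)}_{q_j}$ and $\underline{\varphi}^\sharp$ of the Lemma~2.5 factorization become $C^k$-ring homomorphisms, and concludes. Your direct appeal to Lemma~2.3 (bypassing the factorization) is a harmless shortcut that you yourself flag as equivalent in the parenthetical remark; and your explicit unwinding of Definition~1.2 when $X$ is a point is exactly the bookkeeping the paper leaves implicit in the word ``Thus''.
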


\bigskip

This proves Conjecture~1.3 when $X$ is a point.

\bigskip

\begin{flushleft}
{\bf Validity of Conjecture~1.5 when $X$ is a point}  
\end{flushleft}
Given ${\Bbb R}^n$, as a $C^k$-manifold, with coordinate $(y^1\,\cdots\,, y^n)$
 and an assignment
  $$
    \eta\;:\; y^i\; \longmapsto\; m_i\,\in\,  M_{r\times r}({\Bbb C})\,,\;\;
	i\;=\;1,\,\ldots\,,n\,,
  $$
 that satisfies
  \begin{itemize}
   \item[(1)]
     $\;m_im_j\;=\;m_jm_i$, for all $i,\,j\,$;

   \item[(2)]
	 the eigenvalues of $m_i$ are all real;
	
   \item[(3)]
	 the nilpotency of $m_i$ $\le k+1$, for all $i$.
  \end{itemize}	
Then
  Properties (1) and (2) together imply that
  the collection $\{m_1,\,\cdots\,,\, m_n\}$ of matrices are simultaneously triangularizable
   $$
     m_i\;\sim\;
	  \left[
        \begin{array}{ccc}
		     \lambda^i_1  & \ast  & \ast \\
			    & \ddots  & \ast    \\
		0	 && \lambda^i_r
		\end{array}
	  \right]_{r\times r}\,,	
  $$
  with $\lambda^i_j\in {\Bbb R}$, $1\le i \le n$, $1\le j\le r$.
 Let
  $$
    q_j\; =\; (\lambda_j^1,\,\cdots\,,\,\lambda_j^n)\; \in \; {\Bbb R}^n\,.
  $$
 Then, after removing repetitive copies and relabelling,
  the finite set of points $q_1$, $\cdots$, $q_s$, for some $s\le r$, in ${\Bbb R}^n$
   is an invariant of  the commuting tuple $(m_1, \,\cdots\,,\, m_n)$
   of matrices in $M_{r\times r}({\Bbb C})$.
 Furthermore,
  the fundamental representation ${\Bbb C}^{\oplus n}$ of $M_{r\times r}({\Bbb C})$
  decomposes into the direct sum
  $$
   {\Bbb C}^{\oplus n}\;=\; V_1\oplus\, \cdots\, \oplus V_s
  $$
  of $s$-many common invariant subspaces of $m_1$, $\cdots$, $m_n$
  such that
  the Jordan form of
  $$
     m_i|_{V_j}\; :\;  V_j\; \longrightarrow\;  V_j
  $$
   has diagonal entries all equal to $\lambda^i_j$.
 After a change of basis of ${\Bbb C}^{\oplus n}$ and for  simplicity of notation,
  we may assume that the decomposition is given by
  $$
    {\Bbb C}^{\oplus n}\;
	 =\; {\Bbb C}^{\oplus r_1}\oplus\, \cdots\, \oplus {\Bbb C}^{\oplus r_s}\,,
  $$
  with the $j$-th summand ${\Bbb C}^{\oplus r_j }$
   being associated to $(\lambda^1_j, \,\cdots\,,\,\lambda^n_j)$.
 In terms of this decomposition,
  one can re-express $\eta$ as an assignment
  $$
    \eta\;:\; y^i\; \longmapsto\;
	                  (m_{i, 1},, \,\cdots\,,\, m_{i,s})\;
					  \in\, M_{r_1\times r_1}({\Bbb C})
					                 \times \,\cdots\,\times M_{r_s\times r_s}({\Bbb C})\;
                 \subset\; M_{r\times r}({\Bbb C})\,.									
  $$
 In this expression,
   it is immediate that
   $\eta$ extends to a ring-homomorphism
   $$
     \varphi_{\eta}^{\sharp}\;  :\;
	   C^k({\Bbb R}^n)\; \longrightarrow\; M_{r\times r}({\Bbb C})
   $$
   from the composition of ring-homomorphisms
    $$
	  \xymatrix{
	     C^k({\Bbb R}^n)
	           \ar[d]_-{\oplus_{j=1}^sT_{q_j}^{(k)} }
	       && \hspace{2em}
		         M_{r_1\times r_1}\times \,\cdots\,\;\times M_{r_s\times r_s}({\Bbb C})\;
		           \subset\; M_{r\times r}({\Bbb C})    \\
         \oplus_{j=1}^s
	          \frac{{\Bbb R}[y^1-\lambda_j^1,\,\cdots\,, y^n-\lambda_j^n] }
	                       {(y^1-\lambda_j^1,\,\cdots\,,\, y^n-\lambda_j^n)^{k+1}}
	 	    	\ar[rru]_-{\underline{\varphi}^{\sharp}\;
				                       =\; (\, \underline{\varphi}^{\sharp}_1,\,\cdots\,,\,
        									       \underline{\varphi}^{\sharp}_s\,) }    &&&,
	  }
    $$	
  where
   \begin{itemize}
    \item[\LARGE $\cdot$]
	  $T_{q_j}^{(k)}$ is the map
		 `taking Taylor polynomial (of elements in $C^k({\Bbb R}^n)$) at $q_j$
		      with respect to coordinate $(y^1,\,\cdots\,,\, y^n)$ up to and including degree $k$', and
	
    \item[\LARGE $\cdot$]
      $$
	    \underline{\varphi}^{\sharp}_j\; :\;  		
		  	   \frac{{\Bbb R}[y^1-\lambda_j^1,\,\cdots\,, y^n-\lambda_j^n]}
	                    {(y^1-\lambda_j^1,\,\cdots\,,\, y^n-\lambda_j^n)^{k+1}}\;
          \longrightarrow\; M_{r_j\times r_j}({\Bbb C})\,,							
	  $$ 	
	  is the ${\Bbb R}$-algebra homomorphism generated by sending
	   $y^i\mapsto m_{i,j}$, $i=1,\,\ldots\,,\,n$.
   \end{itemize}

 Equip
    $\frac{{\Bbb R}[y^1-\lambda_j^1,\,\cdots\,, y^n-\lambda_j^n]}
	                    {(y^1-\lambda_j^1,\,\cdots\,,\, y^n-\lambda_j^n)^{k+1}}$   and
    $\Image(\underline{\varphi}^{\sharp}_j)$						
 with the canonical $C^k$-ring structure.
 Then all of
    $T_{q_j}^{(k)}$  and
	$\underline{\varphi}^{\sharp}_j$, $j=1,\,\ldots\,,\,s$,
  become $C^k$-ring homomorphisms.	
 Let $A_{\varphi_{\eta}}:= \Image (\varphi_{\eta}^{\sharp})$
   be equipped with the canonical $C^k$-ring structure.
 It follows then that
  the ring-homomorphism
  $$
     \varphi^{\sharp}_{\eta}\;:\;
	   C^k({\Bbb R}^n)\; \longrightarrow\; A_{\varphi_{\eta}}
  $$
 is also a $C^k$-ring homomorphism.
 This proves Conjecture~1.5 when $X$ is a point.

\bigskip

\section{Proof of Conjectures in the $C^{\infty}$ case}

Conjecture~1.3 and Conjecture~1.5 in the $C^{\infty}$ case are examined in this section for general $X$.

\bigskip

\subsection{Proof of Conjecture~1.3 in the $C^{\infty}$ case}

We prove in this subsection Conjecture~1.3 in the $C^{\infty}$ case:
			
\bigskip

\begin{theorem} {\bf [$C^{\infty}$-map vs.\ ring-homomorphism].}
 Let
   $X$ and $Y$ be $C^{\infty}$-manifolds  and
   $E$ be a complex $C^{\infty}$ vector bundle of rank $r$ on $X$.
 Given a correspondence
   $$
     \varphi^{\sharp}\;:
	   C^{\infty}(Y)\; \longrightarrow\; C^{\infty}(\End_{\Bbb C}(E))\,.
   $$
 Then, the following three statements are equivalent:
   \begin{itemize}
     \item[\rm (1)]
	  $\varphi^{\sharp}$ is a ring-homomorphism over ${\Bbb R}\hookrightarrow {\Bbb C}$.
	
     \item[\rm (2)]
	  $\varphi^{\sharp}$ is a weakly $C^{\infty}$-admissible ring-homomorphism
	   over ${\Bbb R}\hookrightarrow {\Bbb C}$.
	
	 \item[\rm (3)]
	  $\varphi^{\sharp}$ is a $C^{\infty}$-admissible ring-homomorphism
	   over ${\Bbb R}\hookrightarrow {\Bbb C}$.
   \end{itemize}
\end{theorem}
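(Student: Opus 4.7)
The implications $(3)\Rightarrow(2)\Rightarrow(1)$ are immediate from the definitions, so the content of the theorem is the reverse implication $(1)\Rightarrow(3)$. Given a ring-homomorphism $\varphi^\sharp$, the plan is to exhibit the surjection $\tilde\varphi^\sharp$ of Definition~1.2 and to verify that it is a $C^\infty$-normal quotient onto $A_\varphi := C^\infty(X)\langle\Image\varphi^\sharp\rangle$; once this is done, the $C^\infty$-ring structure on $A_\varphi$ induced by the quotient automatically turns both $\varphi^\sharp$ and $pr_X^\sharp$ into $C^\infty$-ring homomorphisms, giving $C^\infty$-admissibility.

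First I would note that $A_\varphi$ is a commutative ${\Bbb R}$-subalgebra of $C^\infty(\End_{\Bbb C}(E))$: $\Image\varphi^\sharp$ is commutative, and the subring $C^\infty(X)\hookrightarrow C^\infty(\End_{\Bbb C}(E))$ of scalar endomorphisms lies in the center. Fiberwise at $p\in X$, $A_\varphi|_p$ is a commutative subalgebra of $M_{r\times r}({\Bbb C})$ of ${\Bbb C}$-dimension bounded in terms of $r$ alone, and by Lemma~2.2.1 the restricted ring-homomorphism $\varphi^\sharp_p:C^\infty(Y)\to A_\varphi|_p$ factors through a Taylor expansion at a finite set of points $q_1(p),\ldots,q_{s(p)}(p)\in Y$.

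Next, because $pr_X^\sharp(C^\infty(X))$ and $\varphi^\sharp(C^\infty(Y))$ commute in $C^\infty(\End_{\Bbb C}(E))$, the assignment $f\otimes g\mapsto pr_X^\sharp(f)\cdot\varphi^\sharp(g)$ extends, via the $C^\infty$-ring coproduct presentation $C^\infty(X\times Y)\simeq C^\infty(X)\,\widehat\otimes_\infty\,C^\infty(Y)$, to a ring-homomorphism $\tilde\varphi^\sharp:C^\infty(X\times Y)\to C^\infty(\End_{\Bbb C}(E))$ whose image is exactly $A_\varphi$. The diagram of Definition~1.2 then commutes by construction as a diagram of ${\Bbb R}$-algebra homomorphisms.

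The main step, and the technical crux, is to show that this $\tilde\varphi^\sharp$ is a \emph{$C^\infty$-normal quotient}, i.e., that $\Ker\tilde\varphi^\sharp$ is a $C^\infty$-ideal of $C^\infty(X\times Y)$. Pointwise, Corollary~2.2.2 together with Lemmas~2.1.1 and~2.1.3 already gives this. To globalize in $p\in X$, I would work near a base-point $p_0\in X$, choose local coordinate charts of $Y$ around each spectral point $q_j(p_0)$, and apply the \emph{Malgrange division theorem} to distinguished polynomials in the $y^i$ built from the commuting tuple $m_i := \varphi^\sharp(y^i)-pr_X^\sharp(\lambda_j^i)$ (which has nilpotency $\le r$, so its characteristic polynomial is Malgrange-regular with coefficients depending smoothly on $p$). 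Dividing an arbitrary $h\in C^\infty(X\times Y)$ by these polynomials yields a $C^\infty$ remainder of bounded $y$-degree, and Borel's theorem realizes arbitrary formal Taylor data at $q_j(p_0)$ by $C^\infty$ functions on $X\times Y$, so $\Ker\tilde\varphi^\sharp$ is locally a Malgrange-type $C^\infty$-ideal, and the required normal-quotient property follows.

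I expect this Malgrange-division step to be the principal obstacle, and it is precisely here that the hypothesis $k=\infty$ is essential: the finitely-differentiable analogue of Malgrange's theorem needed for the general $C^k$ case is only conjectural in this note, which is the reason the general $C^k$ version of Conjecture~1.3 remains open.
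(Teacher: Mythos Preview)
Your identification of Malgrange division as the crucial analytic ingredient is correct, but there is a genuine gap in how you construct $\tilde\varphi^\sharp$, and the main difficulty is misplaced.

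The step where you invoke the $C^\infty$-ring coproduct presentation $C^\infty(X\times Y)\simeq C^\infty(X)\,\widehat\otimes_\infty\,C^\infty(Y)$ to extend $f\otimes g\mapsto pr_X^\sharp(f)\cdot\varphi^\sharp(g)$ is circular. The universal property of the $C^\infty$-coproduct applies only to a pair of $C^\infty$-ring homomorphisms into a $C^\infty$-ring; but $C^\infty(\End_{\Bbb C}(E))$ carries no $C^\infty$-ring structure, and neither $A_\varphi$ nor $\varphi^\sharp$ has yet been shown to be $C^\infty$ in the required sense --- that is precisely Statement~(2) or~(3), which is what you are trying to prove. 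The \emph{algebraic} tensor product $C^\infty(X)\otimes_{\Bbb R}C^\infty(Y)$ does map to $A_\varphi$ as rings, but it is far smaller than $C^\infty(X\times Y)$, and bridging that gap is the actual content of the theorem.

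The paper's route is different: define $\tilde\varphi^\sharp$ pointwise by $\tilde\varphi^\sharp(h)(p):=\bigl(\varphi^\sharp(h|_{\{p\}\times Y})\bigr)(p)$, which a priori lands only in $C^{-\infty}(\End_{\Bbb C}(E))$ (set-theoretic sections). The real work --- and this is where Malgrange division enters --- is to show that this section is \emph{smooth}: locally near $p$, Malgrange division by the characteristic polynomials $\det(y^i\cdot\Id - \varphi^\sharp(y^i))$ reduces $h$ modulo $\Ker\tilde\varphi^\sharp$ to a polynomial in $y^1,\ldots,y^n$ with $C^\infty(U)$-coefficients, whence $\tilde\varphi^\sharp(h)\in C^\infty(U)[\varphi^\sharp(y^1),\ldots,\varphi^\sharp(y^n)]$ is visibly smooth. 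Once smoothness of $\tilde\varphi^\sharp$ is established, the $C^\infty$-normal-quotient condition you label the ``main step'' is automatic in the $C^\infty$ case by Hadamard's lemma (every ring-theoretic quotient of $C^\infty(M)$ inherits a $C^\infty$-ring structure). In short: Malgrange is needed to produce $\tilde\varphi^\sharp$ as a map into smooth sections, not to verify normality of its kernel afterwards.
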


\medskip

\begin{proof}
 Since Statement(3) $\Rightarrow$ Statement (2) $\Rightarrow$ Statement (1),
  one only needs to show that Statement (1) $\Rightarrow$ Statement (3).

 \bigskip

 \noindent
 {\it Step $(a):$ The only natural candidate extension}

 \medskip

 \noindent
 Let $\varphi^{\sharp}: C^{\infty}(Y)\rightarrow C^{\infty}(\End_{\Bbb C}(E))$
   be a ring-homomorphism over ${\Bbb R}\hookrightarrow {\Bbb C}$.
 Consider the ${\Bbb C}$-algebra $C^{-\infty}(\End_{\Bbb C}(E))$
   of sections of the endomorphism bundle
   $\End_{\Bbb C}(E)\rightarrow X$ as a map between sets.
 Then $\varphi^{\sharp}$ extends canonically to a ring-homomorphism
  $$
   \xymatrix{
     C^{-\infty}(\End_{\Bbb C}(E))
	  & \;C^{\infty}(\End_{\Bbb C}(E)) \ar@{_{(}->}[l]
	  &&  C^{\infty}(Y)\ar[ll]_-{\varphi^{\sharp}} \ar@{_{(}->}[d]  \\	
    &&&  C^{\infty}(X\times Y)\ar[lllu]^-{\tilde{\varphi}^{\sharp}}
	}
  $$
  over ${\Bbb C}\hookleftarrow{\Bbb R}$,
    where both inclusions in the diagram are tautological,
  as follows:
 \begin{itemize}
  \item[{\Large $\cdot$}]
    Associated to each $f\in C^{\infty}(X\times Y)$ is the subset
 	$$
	   X^{f;1}\; := \;
	       \{(p, f|_{\{p\}\times Y})\,:\, p\in X\}\; \subset\; X\times C^{\infty}(Y)\,.
    $$
	
  \item[{\Large $\cdot$}]	
   The map
      $\Id_X\times \varphi^{\sharp}:
	       X\times C^{\infty}(Y)\rightarrow U\times C^{\infty}(\End_{\Bbb C}(E))$
	sends $X^{f;1}$ to the subset
	$$
	  X^{f;2}\;=\;
	     \{(p,   \varphi^{\sharp}(f|_{\{p\}\times Y}))\,:\, p\in X\}\;
     		 \subset\; X \times C^{\infty}(\End_{\Bbb C}(E))\,.
	$$
 	
  \item[{\Large $\cdot$}]	
    Which produces a section of $\End_{\Bbb C}(E)\rightarrow X$ as a map between sets:
	 $$
	   s_f\; =\;
	      \{(p,
		        (\varphi^{\sharp}(f|_{\{p\}\times Y}))|_p)\,:\,
				  p\in X\}\;
     		 \in\; C^{-\infty}(\End_{\Bbb C}(E))\,.
     $$
	
  \item[{\Large $\cdot$}]
   $\tilde{\varphi}^{\sharp}:
      C^{\infty}(X\times Y)\rightarrow C^{-\infty}(\End_{\Bbb C}(E))\,$
     is now defined by $\,f\mapsto s_f\,$.
 \end{itemize}
By construction,
 $\tilde{\varphi}^{\sharp}$  is a ring-homomorphism
   over ${\Bbb R}\hookrightarrow{\Bbb C}$   and
 it makes the following diagram of ring-homomorphisms commute:
 $$
   \xymatrix{
    \hspace{1ex}
     C^{-\infty}(\End_{\Bbb C}(E))
	     &  \hspace{1ex}C^{\infty}(\End_{\Bbb C}(E))\ar @{_{(}->}[l]
	   &&& \rule{0ex}{3ex}\hspace{1ex}
	            C^{\infty}(Y)\ar[lll]_-{\varphi^{\sharp}}
	                            \ar @{_{(}->}[d]^{pr_Y^{\sharp}}        \\	
    & \rule{0ex}{2.4ex}
	     \;C^{\infty}(X)\; \ar @{^{(}->}[rrr]_-{pr_X^{\sharp}} \ar@{^{(}->}[u]
	    &&& C^{\infty}(X\times Y) 	  \ar @/^1ex /  [ullll]_(.4){\tilde{\varphi}^{\sharp}}\;,
	}
  $$
  where $\pr_X: X\times Y\rightarrow X$ and $\pr_Y: X\times Y\rightarrow Y$
    are the projection maps and
  $C^{\infty}(X)\hookrightarrow C^{\infty}(\End_{\Bbb C}(E))$
    follows form the inclusion of the center $C^{\infty}(X)^{\Bbb C}$
	of $C^{\infty}(\End_{\Bbb C}(E))$.
 (Cf.\ The construction in
               [L-Y2: Sec.\ 5.1,
			    theme `{\sl A generalization to ring-homomorphisms to Azumaya/matrix algebras}'] (D(11.1)).)
 Notice that $\tilde{\varphi}^{\sharp}$ is the only extension of $\varphi^{\sharp}$
  to $C^{\infty}(X\times Y)$
  that satisfies the above commutative diagram and the natural condition that
  $$
    \tilde{\varphi}^{\sharp}|_{{p}\times Y}\;=\;
	\varphi^{\sharp}|_{p}\;:\; C^{\infty}(Y)\;\longrightarrow\; \End_{\Bbb C}(E|_p)\,,
  $$
  for all $p\in X$.

 \bigskip

 \noindent
 {\it Step $(b):$ From the aspect of germs over $X$}

 \medskip

 \noindent
 {To} understand whether $\tilde{\varphi}^{\sharp}$ takes its values in
   $C^{\infty}(\End_{\Bbb C}(E))$, one needs to know
   how $\tilde{\varphi}^{\sharp}|_{p\times Y}:
                      C^{\infty}(Y)\rightarrow \End_{\Bbb C}(E|_p)$
	varies as $p$ varies along $X$.
 This leads us to studying the germs-over-$X$ aspect of $\tilde{\varphi}^{\sharp}$,
  which we now proceed.

 \bigskip

 \noindent
 {\bf Definition~3.1.1.1. [spectral locus/subscheme $\varphi^{\sharp}$].}
   Let $I_{\varphi}\subset C^{\infty}(X\times Y)$ be the ideal of $C^{\infty}(X\times Y)$
     generated by the set
     $$
        \{\,\determinant (f\cdot\Id_{r\times r}-\varphi^{\sharp}(f)) \,|\, f\in C^{\infty}(Y)\,\}
     $$
     of elements in $C^{\infty}(X\times Y)$, where $\Id_{r\times r}$ is the $r\times r$ identity matrix.
   $I_{\varphi}$  defines a $C^{\infty}$-subscheme  $\varSigma_{\varphi}$ of $X\times Y$,
    called interchangeably the {\it spectral locus} or the {\it spectral subscheme} of $\varphi^{\sharp}$
    in $X\times Y$.

 \bigskip

 \noindent
 Notice that
  while the local matrix presentation of $\varphi^{\sharp}(f)$
    depends on the local trivialization of $E$ chosen,
  the determinant $\determinant(f\cdot\Id_{r\times r}-\varphi^{\sharp}(f))$ does not
     and, hence, is well-defined.

 Some properties of $\varSigma_{\varphi}$ that follow immediately from the defining ideal $I_{\varphi}$
  are listed below:
  \begin{itemize}
   \item[\LARGE $\cdot$]
    {\it $\varSigma_{\varphi}$ is finite over $X$}
 	 in the sense that, for all $p\in X$,
	 the preimage $\pr_X^{-1}(p)$  of the morphism $\pr_X:\varSigma_{\varphi}\rightarrow X$
	   from the restriction of the projection map $X\times Y\rightarrow Y$
	  are all $0$-dimensional $C^{\infty}$-scheme
	  with the function-ring given by a (commutative) finite-dimensional ${\Bbb R}$-algebra. 	
	
   \item[\LARGE $\cdot$]
    A comparison with the study of ring-homomorphisms from $C^{\infty}({\Bbb R}^n)$ to
	 $M_{r\times r}({\Bbb C})$ in [L-Y2: Sec.\ 3.2] (D(11.1))
	 implies that
	 \begin{itemize}
	  \item[-$\;$]
       $\tilde{\varphi}^{\sharp}(I_{\varphi})\;=\; 0\,$.
	
	  \item[-$\;$]\it
	   for all $f\in C^{\infty}(X\times Y)$,
       $\tilde{\varphi}^{\sharp}(f)\in C^{-{\infty}}(\End_{\Bbb C}(E))$
	   depends only on the restriction of $f$ on the $C^{\infty}$-subscheme
	    $\varSigma_{\varphi}\subset X\times Y$.	
	 \end{itemize}
  \end{itemize} 	
 We emphasize that,
  being a $C^{\infty}$-scheme defined by an ideal of $C^{\infty}(X\times Y)$,
  the spectral locus $\varSigma_{\varphi}$ of $\varphi$
    is more than just a closed subset of $X\times Y$;
  cf.\ {\sc Figure}~3-1-1-1.

 \begin{figure} [htbp]
  \bigskip
  \centering

  \includegraphics[width=0.55\textwidth]{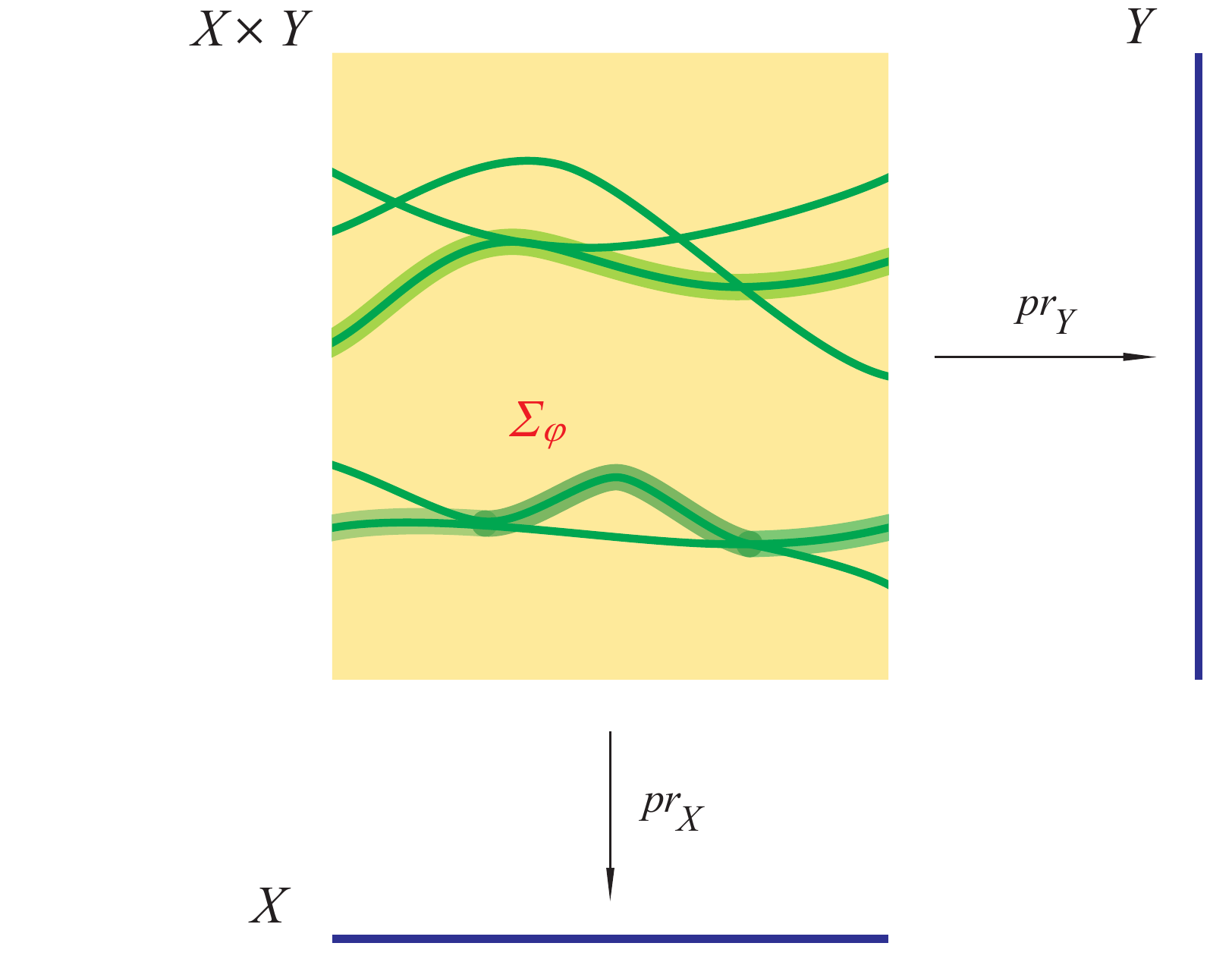}

  \bigskip
  \bigskip
  \centerline{\parbox{13cm}{\small\baselineskip 12pt
   {\sc Figure}~3-1-1-1.
     The spectral subscheme $\varSigma_{\varphi}$
	   (in green color, with the green shade indicating the nilpotent structure/cloud
	       on $\varSigma_{\varphi}$)
	   in $X\times Y$
	  associated to a ring-homomorphism
	   $\varphi^{\sharp}:C^{\infty}(Y)\rightarrow C^{\infty}(\End_{\Bbb C}(E))$.
	 More than just a point-set with topology, 
	   it is a $C^{\infty}$-scheme that is finite over $X$.
       }}
  \bigskip
 \end{figure}

 Recall the morphism $\pr_X:\varSigma_{\varphi}\rightarrow X$.
 Let $p\in X$.
 Then
  since $\pr_X^{-1}(p)$ is $0$-dimensional,
  there exists an open neighborhood $U$ of $p$ such that
   $\pr_X^{-1}(U)$
     is contained in an open subset $U\times V$ of $X\times Y$,
	 where $V$ is an open subset of $Y$ that is diffeomorphic to ${\Bbb R}^n$ with $n=\dimm Y$.
 Under the diffeomorphism $V\simeq {\Bbb R}^n$,
  let  $(y^1,\,\cdots\,,\, y^n)$  be coordinates on $V$ and let
  $$
    (\pr_X^{-1}(p))_{red}\;=\; \{q_1,\,\cdots\,,\,q_s\}
  $$
  be the set of closed points in $\pr_X^{-1}(p)\subset \varSigma_{\varphi}$.
 (For notation,
     $q_j=(p; q_j^1,\,\cdots\,,\,q_j^n)\in U\times V$
	 in the coordinate system $(y^1,\,\cdots\,,\,y^n)$ on $V$.)
 Consider the auxiliary $C^{\infty}$-subscheme
  $$
    \varSigma_{(y^1,\,\cdots\,,\,y^n)}\; \subset \; U\times V
  $$
  defined by the ideal
  $$
   \mbox{
    $I_{(y^1,\,\cdots\,,\,y^n)}\;
	 :=\;  (g_1,\,\cdots\,, g_n)\;\subset\; C^{\infty}(U\times V)\,,\;\;$
	 where
     $\;g_i\;:=\;  \determinant(y^i\cdot\Id_{r\times r}-\varphi^{\sharp}(y^i))\,$.
	}
  $$
 Then,
  $$
    \varSigma_{\varphi}\,\cap\, U\times V\; \subset\;  \varSigma_{(y^1,\,\cdots\,,\,y^n)}
  $$
  and, again, one has
  $$
    \tilde{\varphi}^{\sharp}(I_{(y^1,\,\cdots\,,\,y^n)})\; =\; 0\,.
  $$

  Now let
  $$
    d_{i,1},\,\cdots\,,\,  d_{i,s}
  $$
  be the regularity of $g_i$ along the $y^i$-coordinate direction at $q_1,\,\cdots,\,\, q_s$ respectively
  (cf.\ [Br: 6.1 Definition]).
  I.e.\
  $$
    g_i(q_j)\;=\; \partial_ig_i(q_j) \; =\; \cdots\;
	 =\; \partial_i^{d_{i,j}-1}g_i(q_j)\;=\; 0
	\hspace{1em}\mbox{while}\hspace{1em}
	\partial_i^{d_{i,j}}g_i(q_j)\;\ne \; 0\,.	
  $$
  Here, $\partial_i := \partial/\partial y^i$.
 Then, it follows from the Malgrange Division Theorem ([Mal]; see also [Br], [Mat1], [Mat2], [Ni]) that
  \begin{itemize}
    \item[]\it
    the germ of $f\in C^{\infty}(X\times Y)$ at $q_j$ admits a normal form
	 $$
	    f= f_0^{(q_j)}\,+\, f_1^{(q_j)}
	 $$
	with
	 $$
	  \begin{array}{c}
	    f_0^{(q_j)}\; \in\; C^{\infty}(U)[y^1,\,\cdots\,,\, y^n]
        \hspace{1em}\mbox{of $\;(y^1,\,\cdots\,,\,y^n)$-degree}\;
        \le(d_{1,j}-1,\,\cdots\,,\, d_{n,j}-1)\\[2ex]
       \hspace{2em}\mbox{and}\hspace{2em}
       f_1^{(q_j)}\;\in \; I_{(y^1,\,\cdots\,,\,y^n)}\,. \hspace{5em}	   		
	  \end{array}
	 $$
  \end{itemize}
 After
   shrinking the neighborhood $U$ of $p\in X$ further, if necessary,  and
   capping $f_0^{(q_j)}$ (still denoted by $f_0^{(q_j)}$)
  by a smooth cutoff function with support a disjoint union of small enough coordinate balls
  around $q_j$, $j=1,\,\ldots\,,\,s$,
 $$
  \tilde{\varphi}^{\sharp}(f)|_{U}\;
   =\;   \tilde{\varphi}^{\sharp}(\,\mbox{$\sum_{j=1}^s f_0^{(q_j)}$}\,)\;
   \in\; C^{\infty}(X)[\varphi^{\sharp}(y_1),\,\cdots\,,\, \varphi^{\sharp}(y^n)]\;
   \subset\; C^{\infty}(\End_{\Bbb C}(E|_U))
 $$
 since
   \begin{itemize}
    \item[\LARGE $\cdot$]
	 $\tilde{\varphi}^{\sharp}(h)\;=\; \varphi^{\sharp}(h) $ for all $h\in C^{\infty}(Y)$,
	   and particularly for $y^i$, $i=1\,\ldots\,,\,n\,$;
	\item[\LARGE $\cdot$]
	 $\tilde{\varphi}^{\sharp}(h)\;=\; h\cdot \Id_{E}$ for all $h\in C^{\infty}(X)$,
	  where $\Id_E$ is the identity map on $E$.
   \end{itemize}
 Since smoothness is a local (indeed, infinitely infinitesimal) property, 	
   smoothness of $\tilde{\varphi}^{\sharp}(f)$  for all $f\in C^{\infty}(X\times Y)$
   follows.
 This shows that
	 $\Image(\tilde{\varphi}^{\sharp})\subset C^{\infty}(\End_{\Bbb C}(E))$.

  \bigskip

  \noindent
  {\it Step $(c):$ Conclusion}

  \medskip

  \noindent
 Let $A_{\varphi}:=\Image(\tilde{\varphi}^{\sharp})$,
  which is identical to the $C^{\infty}(X)$-subalgebra
    $C^{\infty}\langle \Image(\varphi^{\sharp}) \rangle$
	 of $C^{\infty}(\End_{\Bbb C}(E))$
	 generated by $C^{\infty}(X)$ and $\Image(\varphi^{\sharp})$
	 in $C^{\infty}(\End_{\Bbb C}(E))$.
 Then, 			
   in the $C^{\infty}$ case,  as a consequence of the Hadamard's Lemma,
   the $C^{\infty}$-ring structure on $C^{\infty}(X\times Y)$ always descends,
     via $\tilde{\varphi}^{\sharp}$,
   to a $C^{\infty}$-ring structure on $A_{\varphi}$
    that is compatible with the underlying ring-structure of $A_{\varphi}$.
  In this way, one obtains a commutative diagram
     $$
		\xymatrix{
	       A_{\varphi}
			     &&& C^{\infty}(Y) \ar[lll]_-{\varphi^{\sharp}}
			                                      \ar@{_{(}->}^-{pr_Y^{\sharp}}[d]   \\			    
			   \rule{0ex}{1em}C^{\infty}(X) \ar@{^{(}->}[u]
			                                                                 \ar@{^{(}->}[rrr]_-{pr_X^{\sharp}}
				 &&& C^{\infty}(X\times Y) \ar@{->>}[lllu]_-{\tilde{\varphi}^{\sharp}}		
		}
	 $$
    of $C^{\infty}$-ring homomorphisms.
 This shows that $\varphi^{\sharp}$ is $C^{\infty}$-admissible and proves the theorem.

\end{proof}

\bigskip

\subsection{Proof of Conjecture~1.5 in the $C^{\infty}$ case}

We prove in this subsection Conjecture~1.5 in the $C^{\infty}$ case:

\bigskip

\begin{theorem} {\bf [$C^{\infty}$-map to ${\Bbb R}^n$].}
 Let
  $X$ be a $C^{\infty}$-manifold  and
  $E$ be a complex $C^{\infty}$ vector bundle of rank $r$ on $X$.
 Let
  $(y^1,\,\cdots\,, y^n)$ be a global coordinate system on ${\Bbb R}^n$,
  as a $C^{\infty}$-manifold, and
  $$
    \eta\;:\; y^i\; \longmapsto\; m_i\,\in\, C^{\infty}(\End_{\Bbb C}(E))\,,\;\;
	i\;=\;1,\,\ldots\,,n\,,
  $$
 be an assignment  such that
  \begin{itemize}
   \item[(1)]
     $\;m_im_j\;=\;m_jm_i$, for all $i,\,j\,$;

   \item[(2)]
    for every $p\in X$,
	 the eigenvalues of the restriction
	   $m_i(p)\in \End_{\Bbb C}(E|_p)\simeq M_{r\times r}({\Bbb C})$
	   are all real.
 \end{itemize}
 Then,
  $\eta$ extends to a unique $C^{\infty}$-admissible ring-homomorphism
  $$
    \varphi_{\eta}^{\sharp}\;:\;
	 C^{\infty}({\Bbb R}^n)\; \longrightarrow\; C^{\infty}(\End_{\Bbb C}(E))
  $$
  over ${\Bbb R}\hookrightarrow{\Bbb C}$ and, hence,
  defines a $C^{\infty}$-map $\varphi_{\eta}:(X^{\!A\!z},E)\rightarrow {\Bbb R}^n$.
\end{theorem}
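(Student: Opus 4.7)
The plan is to construct $\varphi_{\eta}^{\sharp}$ fiberwise over $X$, verify that the resulting section is $C^{\infty}$, and then appeal to Theorem~3.1.1 to upgrade the ring-homomorphism to a $C^{\infty}$-admissible one. Specifically, for each $p\in X$, Conditions (1) and (2) ensure that $m_1(p),\,\cdots\,,\,m_n(p)\in\End_{\Bbb C}(E|_p)$ are pairwise commuting matrices with real eigenvalues; in the $C^{\infty}$ setting the nilpotency of each $m_i(p)$ is automatically $\le r<\infty$, so the pointwise result of Section~2 applies. This produces a unique ring-homomorphism
$$
  \varphi_{\eta,p}^{\sharp}\; :\; C^{\infty}({\Bbb R}^n)\;\longrightarrow\;\End_{\Bbb C}(E|_p)
$$
over ${\Bbb R}\hookrightarrow{\Bbb C}$ extending $y^i\mapsto m_i(p)$. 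Setting $\varphi_{\eta}^{\sharp}(f)(p):=\varphi_{\eta,p}^{\sharp}(f)$ defines an a~priori set-theoretic section of $\End_{\Bbb C}(E)$, that is, an element of $C^{-\infty}(\End_{\Bbb C}(E))$. By construction $\varphi_{\eta}^{\sharp}$ is a ring-homomorphism over ${\Bbb R}\hookrightarrow{\Bbb C}$ (fiberwise) and sends $y^i$ to $m_i$.

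The main step, and the main obstacle, is smoothness of $\varphi_{\eta}^{\sharp}(f)$ for every $f\in C^{\infty}({\Bbb R}^n)$. Here I would mimic Step~$(b)$ of the proof of Theorem~3.1.1. Introduce the spectral subscheme $\varSigma_{\varphi_{\eta}}\subset X\times{\Bbb R}^n$ cut out by $I_{\varphi_{\eta}}$, together with the auxiliary ideal $I_{(y^1,\ldots,y^n)}:=(g_1,\ldots,g_n)$ with $g_i:=\determinant(y^i\cdot\Id_{r\times r}-\varphi_{\eta}^{\sharp}(y^i))=\determinant(y^i\cdot\Id_{r\times r}-m_i)$. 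Commutativity of the $m_i$'s makes the projection $\pr_X:\varSigma_{\varphi_{\eta}}\to X$ finite, so near any $p\in X$ there is a neighborhood $U$ such that $\pr_X^{-1}(U)$ is contained in $U\times V$ for a coordinate chart $V\simeq{\Bbb R}^n$, and $(\pr_X^{-1}(p))_{red}=\{q_1,\ldots,q_s\}$ is finite. Cayley--Hamilton applied fiberwise gives $\tilde{\varphi}_{\eta}^{\sharp}(I_{(y^1,\ldots,y^n)})=0$, exactly as in the earlier proof. Applying the Malgrange Division Theorem to each $g_i$ along $y^i$ at each $q_j$, any $f\in C^{\infty}(U\times V)$ (and in particular the pullback of $f\in C^{\infty}({\Bbb R}^n)$) admits, after a smooth partition-of-unity cutoff, a decomposition
$$
  f\;=\;\sum_{j=1}^{s} f_0^{(q_j)}\;+\;f_1\,,\qquad f_0^{(q_j)}\in C^{\infty}(U)[y^1,\ldots,y^n]\,,\quad f_1\in I_{(y^1,\ldots,y^n)}\,.
$$
Hence
$$
  \varphi_{\eta}^{\sharp}(f)|_U\;=\;\sum_{j=1}^{s}\varphi_{\eta}^{\sharp}\bigl(f_0^{(q_j)}\bigr)\;\in\;C^{\infty}(X)[\,m_1,\ldots,m_n\,]\bigl|_U\;\subset\;C^{\infty}(\End_{\Bbb C}(E|_U))\,,
$$
which is manifestly smooth on $U$. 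Since smoothness is local, $\varphi_{\eta}^{\sharp}(f)\in C^{\infty}(\End_{\Bbb C}(E))$.

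With $\varphi_{\eta}^{\sharp}$ now established as a ring-homomorphism $C^{\infty}({\Bbb R}^n)\to C^{\infty}(\End_{\Bbb C}(E))$ over ${\Bbb R}\hookrightarrow{\Bbb C}$, Theorem~3.1.1 immediately gives that it is $C^{\infty}$-admissible. Uniqueness is automatic: any extension to a ring-homomorphism $C^{\infty}({\Bbb R}^n)\to C^{\infty}(\End_{\Bbb C}(E))$ sending $y^i\mapsto m_i$ must agree with $\varphi_{\eta,p}^{\sharp}$ at every $p\in X$ by the uniqueness in the pointwise case of Section~2, hence coincides globally with $\varphi_{\eta}^{\sharp}$. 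The hard part, as in Theorem~3.1.1, is thus really the smoothness step via Malgrange's division; once that is in hand, everything else is bookkeeping.
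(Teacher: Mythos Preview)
Your proposal is correct and follows essentially the same route as the paper: construct $\varphi_{\eta}^{\sharp}$ fiberwise via the pointwise case of Section~2, land a priori in $C^{-\infty}(\End_{\Bbb C}(E))$, and then establish smoothness by the Malgrange-division argument of Step~$(b)$ in the proof of Theorem~3.1.1. The only (minor, cosmetic) difference is that the paper explicitly repeats Step~$(c)$ to exhibit the $C^{\infty}$-admissible diagram, whereas you invoke Theorem~3.1.1 as a black box at the end; both yield the same conclusion.
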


\bigskip

Notice that Condition (3): {\it for every $p\in X$, the nilpotency of $m_i(p)$ $\le k+1$}
 in the statement of Conjecture~1.5 is automatically satisfied in the $C^{\infty}$ case.

\bigskip

\begin{proof}
 Given $\eta$ in the statement of the theorem,
  it follows from Sec.~2 that
    for all $p\in X$, the assignment from restriction
    $$
     \eta_p\;:\; y^i\; \longmapsto\; m_i(p)\,\in\,  \End_{\Bbb C}(E|_p)\,,\;\;
	  i\;=\;1,\,\ldots\,,n\,,
    $$	
    extends uniquely to a ring-homomorphism
    $$
      \varphi^{\sharp}_{\eta_p}\;:\;
	   C^{\infty}({\Bbb R}^n)\; \longrightarrow\;   \End_{\Bbb C}(E|_p)
    $$
    over ${\Bbb R}\hookrightarrow {\Bbb C}$ that is $C^{\infty}$-admissible over $p$.
 As $p$ varies,  $\eta$ extends uniquely to a ring-homomorphism
    $$
     \varphi_{\eta}^{\sharp}\;:\;
  	  C^{\infty}({\Bbb R}^n)\; \longrightarrow\;
	  C^{-{\infty}}(\End_{\Bbb C}(E))
    $$
    over ${\Bbb R}\hookrightarrow {\Bbb C}$.
 The same construction as Step (a) in the proof of Theorem~3.1.1
   extends $\varphi_{\eta}^{\sharp}$ further and uniquely to a ring-homomorphism
     $$
	    \tilde{\varphi}_{\eta}^{\sharp}\;:\;
	      C^{\infty}(X\times {\Bbb R}^n)\; \longrightarrow\;
		  C^{-{\infty}}(\End_{\Bbb C}(E))
	 $$
	 over ${\Bbb R}\hookrightarrow {\Bbb C}$
	that fits into the following commutative diagram
	 $$
      \xymatrix{
        \hspace{1ex}
         C^{-\infty}(\End_{\Bbb C}(E))
	       &  \hspace{1ex}C^{\infty}(\End_{\Bbb C}(E))\ar @{_{(}->}[l]
	      &&& \rule{0ex}{3ex}\hspace{1ex}
	               C^{\infty}({\Bbb R}^n)\ar@/_2em/[llll]_-{\varphi_{\eta}^{\sharp}}
	                               \ar @{_{(}->}[d]^{pr_{{\Bbb R}^n}^{\sharp}}        \\	
       & \rule{0ex}{2.4ex}
	        \;C^{\infty}(X)\; \ar @{^{(}->}[rrr]_-{pr_X^{\sharp}} \ar@{^{(}->}[u]
	       &&& C^{\infty}(X\times {\Bbb R}^n) 	
		                    \ar @/^1ex /  [ullll]_(.4){\tilde{\varphi}_{\eta}^{\sharp}}\;,
	   }
     $$
	of ring-homomorphisms while satisfying the condition that
	 $$
        \tilde{\varphi}_{\eta}^{\sharp}|_{{p}\times Y}\;=\;
	    \varphi_{\eta_p}^{\sharp}\;
		:\;  C^{\infty}({\Bbb R}^n)\;\longrightarrow\; \End_{\Bbb C}(E|_p)\,,
     $$
     for all $p\in X$.
	
 The same argument as Step (b) in the proof of Theorem~3.1.1
    implies that indeed
     $\tilde{\varphi}_{\eta}^{\sharp}$
   	 takes values in $C^{\infty}(\End_{\Bbb C}(E))$.
 Thus, so does $\varphi_{\eta}^{\sharp}$.	
 As in Step (c) there,
 one thus has the following commutative diagram
     $$
		\xymatrix{
	       A_{\varphi_{\eta}}
			     &&& C^{\infty}({\Bbb R}^n) \ar[lll]_-{\varphi_{\eta}^{\sharp}}
			                                      \ar@{_{(}->}^-{pr_{{\Bbb R}^n}^{\sharp}}[d]   \\
			   \rule{0ex}{1em}C^{\infty}(X) \ar@{^{(}->}[u]
			                                                                 \ar@{^{(}->}[rrr]_-{pr_X^{\sharp}}
				 &&& C^{\infty}(X\times {\Bbb R}^n)
				                                  \ar@{->>}[lllu]_-{\tilde{\varphi}_{\eta}^{\sharp}}		
		}
	 $$
    of $C^{\infty}$-ring homomorphisms,
  where
    $A_{\varphi_{\eta}}:= \Image(\tilde{\varphi}_{\eta}^{\sharp})
	   \subset C^{\infty}(\End_{\Bbb C}(E))$.
 This shows that $\varphi_{\eta}^{\sharp}$ is $C^{\infty}$-admissible and proves the theorem.

\end{proof}

\bigskip

\section{Remarks on the general $C^k$ case}

Even if not conceptually,
technically the finitely differentiable case seems to be more difficult
   than the smooth (i.e.\ infinitely differentiable) case.
Some remarks are collected here as a guide to verify Conjecture~1.3 and Conjecture~1.5 in full
 (or to correct them, taking the statements as the reference starting point and see how things could break).

\bigskip

\begin{flushleft}
{\bf Reflections on $C^{\infty}$- vs.\ general $C^k$-algebraic geometry, and the proof}
\end{flushleft}
(1)
 {From} the construction of
    the canonical  $C^k$-ring structure on a commutative finite-dimensional ${\Bbb R}$-algebra
  in Sec.~2,
  one learns that
   while $C^{\infty}$-algebraic geometry is self-contained (in the sense that
     only elements in $\cup_{l=0}^{\infty}C^{\infty}({\Bbb R}^l)$ are involved),
   $C^k$-algebraic geometry with $k$ finite may not
    (in the sense that elements in
	    $\cup_{k^{\prime}<k}\cup_{l=0}^{\infty}
		    C^{k^{\prime}}({\Bbb R}^l)$
	    that come from partial derivatives of elements in
		   $\cup_{l=0}^{\infty}C^{\infty}({\Bbb R}^l)$
        are involved as well
		when the $C^k$-scheme considered is not reduced).
			
 \bigskip

(2)
The proof of Conjecture~1.5 in the smooth case (Theorem~3.2.1)
    by first constructing
	     $\varphi_{\eta}^{\sharp}$ and  $\tilde{\varphi}_{\eta}^{\sharp}$
           with values in $C^{-{\infty}}(\End_{\Bbb C}(E))$  and
	     then proving that they actually take values in $C^{\infty}(\End_{\Bbb C}(E))$
 reminds one of wall-crossing phenomena in string theory\footnote{Readers
                           are referred to [C-V] (1993) when such notion started to surface in string theory
						     before  becoming a major study,
                           and to keyword search for more recent fast and vast development
						     from various stringy and/or mathematical aspects,
						     including counting solitonic D-brane systems.}
 in which  some quantities
   (e.g.\ soliton numbers;  here, canonical-form-rendering automorphisms/frames)
 jump cell by cell in order that a related geometric quantity
  (e.g.\ flat sections from solutions to a differential system;
      here, endomorphisms of a complex vector bundle)
 can be kept continuous (here even differentiable).
A simple example serves to illuminate this:

\bigskip

\begin{sexample} {\bf [wall-crossing of frames vs.\ smoothness of endomorphism].} \rm
 Let
  $X^{\!A\!z}$ be the Azumaya/matrix smooth line $({\Bbb R}^{1,A\!z},E)$,
    where $E$ is a complex vector bundle of rank $2$ on $X={\Bbb R}^1$ with coordinate $x$, and     
  $Y={\Bbb R}^1$ be the smooth real line with coordinate $y$.
 For convenience, we assume that $E$ is trivialized.
 Consider the assignment
  $$
    \eta\;:\; y\; \longmapsto\;
	  m\; =\; \left[ \begin{array}{cc} x & \;\;\;1 \\ 0 & -x    \end{array} \right]\;
	                  \in\; C^{\infty}(\End_{\Bbb C}(E))\,.
  $$
 {To} extend $\eta$ to a $C^{\infty}$-admissible ring-homomorphism
  $$
     \varphi_{\eta}^{\sharp}\; ;\;
	   C^{\infty}(Y)\; \longrightarrow\;
	   C^{\infty}(\End_{\Bbb C}(E))\,,
  $$
  consider the following chamber structure on $X$ and
  Jordan-form-of-$m$-rendering frames $(e_1,e_2)$ of $E$ over each chamber:
  $$
  \begin{array}{lcl}
    \mbox{for $x\ne 0$,}
	  &&  (e_1,e_2)\;=\;
	            \left[\begin{array}{cc} 1 & 1 \\ 0 & 2x   \end{array}  \right]\; =:\; S_1\,; \\[3.6ex]
    \mbox{for $x=0$,}
      &&  (e_1,e_2)\;=\;
	            \;\left[\begin{array}{cc} 1 & 0 \\ 0 & 1     \end{array}  \right]\;\; =:\; S_0\; .
  \end{array}
  $$

Let $f\in C^{\infty}(Y)$.
Then,
  for $x\ne 0$, one has
   $$
     m|_{x\ne 0}\;
	     =\;  S_1  \cdot
	                         \left[\begin{array}{cc} x & \;0 \\ 0 & -x\end{array}\right]
							 \cdot S_1^{-1}\,;
   $$
 thus, over each of the two chambers $\{x>0\}$  and $\{x<0\}$,
  $$
    f(m|_{x\ne 0})\;
	  =\;  S_1\cdot
                          f\left(\left[
                                         \begin{array}{cc} x & \;0 \\ 0 & -x \end{array}			
			                 \right]\right)
  	                      \cdot S_1^{-1}\;
      =\;S_1\cdot
                       \left[
                           \begin{array}{cc} f(x) & 0 \\ 0 & f(-x) \end{array}			
			           \right]
  	                 \cdot S_1^{-1}\;       						
      =\; \left[
	         \begin{array}{cc}
			    f(x) &  \frac{f(x)-f(-x)}{2x}\\
			    0 & f(-x)
			 \end{array}
            \right].
  $$
  While at $x=0$,
  $$
     m(0)\;=\; \left[ \begin{array}{cc} 0 & 1 \\ 0 & 0 \end{array}\right]
  $$
  and
  $$
   f(m(0))\;
    =\; f\left( \left[\begin{array}{cc} 0 & 1 \\ 0 & 0\end{array}\right] \right)\;
	=\; \left[\begin{array}{cc} f(0) & f^{\prime}(0) \\ 0 & f(0)    \end{array}\right]\,,
  $$
  where $f^{\prime}(0)=\frac{df}{dy}(0)$.
 Notice that for $f\in C^{\infty}({\Bbb R}^1)$,
  $( f(x)- f(-x)  )/(2x)$ is smooth at $0$
  (an immediate consequence of the Malgrange Division Theorem again) and hence at all $x$,
  with its value at $0$ equal to $f^{\prime}(0)$.
 Thus,
  while the Jordan form $J_m$  of $m$, Jordan-form-of-$m$-rendering $S$,
   and, hence, all the factors in the product $\;S\,f(J_m)\,S ^{-1}\,$
   are discontinuous at $x=0$,
  the product $\;S\,f(J_m)\,S ^{-1}$, which gives $f(m)$,
  remains continuous, even smooth, over all $X$.

\end{sexample}

\bigskip

\begin{flushleft}
{\bf A conjecture on a division lemma in the finitely differentiable case}
\end{flushleft}
In the proof of Theorem~3.1.1 and Theorem~3.2.1,
though differentiability of
   $\tilde{\varphi}^{\sharp}(f)$ or
   $\tilde{\varphi}_{\eta}^{\sharp}(f)\in C^{-\infty}(\End_{\Bbb C}(E))$
  at a point $p\in X$
  is an issue that involves only an infinitesimal neighborhood of $p\in X$,
 technically it looks very difficult to prove it without employing consequences
  from the Malgrange Division Theorem,
  which is a theorem at the level of germs on a small neighborhood of $p\in X$.
(Cf.\ Readers may try to prove directly that
      $(f(x)-f(-x))/(2x)$ for $f\in C^{\infty}({\Bbb R}^1)$
  	  in Example~4.1 is smooth at $x=0$
	  without employing the Malgrange Division Theorem or its similar construction or argument.)
While such theorem looks more than we need,
 if it is indispensable, then one would expect  a version of it in the finitely differentiable case
 would prove both Conjecture~1.3 and Conjecture~1.5
  since all other part of the proof of Theorem~3.1.1 and Theorem~3.2.1 works also for finite $k$.
The following conjecture is guided by the Taylor expansion of a $C^k$-function
  in the normal direction to a codimension-$1$ $C^k$ subscheme:

\bigskip

\begin{sconjecture} {\bf [generalized division lemma in finitely differentiable case].}
 Let
    \begin{itemize}
	  \item[\LARGE $\cdot$]
       $0$ be the origin of $\,{\Bbb R}^{m+1}$
	   (and same notation also for the origin of $\,{\Bbb R}^m$, if necessary),
	
	  \item[\LARGE $\cdot$]
       $y$ be the $(m+1)$-th coordinate of $\,{\Bbb R}^{m+1}={\Bbb R}^m\times {\Bbb R}^1$,
	
	  \item[\LARGE $\cdot$]
       $h\in C^k({\Bbb R}^{m+1}) $ such that
         $$
           h(0)\;=\; \partial_yh(0)\;=\; \cdots\; =\; \partial_y^{\,s-1}h(0)\;=0
		    \hspace{1em}\mbox{while}\hspace{1em}
		   \partial_y^{\,s}h(0)\ne 0
         $$
		 for some $s\le k$.
   \end{itemize}
  Denote by $C^k({\Bbb R}^{m+1})_{(0)}$
      the germs of $C^k$-functions on ${\Bbb R}^{m+1}$ at $0$;
	 and similarly for $C^s({\Bbb R}^m)_{(0)}$.
 Then,
   for all $f\in C^k({\Bbb R}^{m+1})_{(0)}$,
   there exists $g\in C^k({\Bbb R}^{m+1})_{(0)}$
         (or some sensible subset of
		     $\cup_{k^{\prime}=0}^kC^{k^{\prime}}({\Bbb R}^{m+1})_{(0)}$)
     and $a_i\in C^{k-i}({\Bbb R}^m)_{(0)}$, $i=1,\,\ldots\,,\,s$,
   such that
    $$
	    f\; =\;  gh + \sum_{i=1}^s a_iy^{s-i}
	$$
	in $C^k({\Bbb R}^{m+1})_{(0)}$.
\end{sconjecture}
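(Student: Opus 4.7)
The plan is a two-step reduction: first handle the model case $h(x,y)=y^s$ directly, and then reduce the general $y$-regular $h$ to the model case via a $C^k$-version of the Weierstrass--Malgrange preparation theorem. For the model case $h=y^s$, Taylor's formula with integral remainder in the $y$-variable yields
\[
f(x,y)\;=\;\sum_{j=0}^{s-1}\frac{1}{j!}(\partial_y^{j}f)(x,0)\,y^j\;+\;y^{s}\int_0^1\frac{(1-u)^{s-1}}{(s-1)!}(\partial_y^{s}f)(x,uy)\,du.
\]
Reindexing by $i=s-j$, setting $a_i(x):=(\partial_y^{s-i}f)(x,0)/(s-i)!$, and taking $g$ to be the explicit integral factor gives the representation $f=g\cdot y^s+\sum_{i=1}^{s}a_iy^{s-i}$, with $a_i\in C^{k-s+i}(\mathbb{R}^m)_{(0)}$ and $g\in C^{k-s}$ near $0$.

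For the general case, I would establish (or invoke) a $C^k$-Weierstrass preparation: on a neighborhood of $0$ one writes $h=q\cdot P$, where $P(x,y)=y^s+c_1(x)y^{s-1}+\cdots+c_s(x)$ is a Weierstrass polynomial with $c_i(0)=0$ and $c_i\in C^{k-s}(\mathbb{R}^m)_{(0)}$, and $q$ is a germ-unit at $0$. Polynomial long division of $f$ by the monic-in-$y$ polynomial $P$ over the ring of germs at $0\in\mathbb{R}^m$ produces $f=QP+R$ with $R(x,y)=\sum_{i=1}^{s}\alpha_i(x)y^{s-i}$; writing $QP=(Qq^{-1})h$ and setting $g:=Qq^{-1}$ delivers the conjectured representation, with the $\alpha_i$ expressed algebraically in terms of the Taylor coefficients of $f$ and polynomial expressions in the $c_i$.

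The main technical obstacle is establishing the $C^k$-Weierstrass preparation with sharp regularity control. In the smooth category this is classical ([Mal], with later simplifications by Mather and Nirenberg), but the standard implementations employ a fixed-point or contour-integral construction that differentiates $h$ in $y$ up to $s$ times, which is precisely the mechanism producing the expected $C^{k-s}$ loss. Mather's fixed-point approach is probably the most transparent starting point: $q$ and the $c_i$ arise as the unique solution of a Banach-space contraction whose data involves $\partial_y h,\ldots,\partial_y^{s}h$, and propagating the $s$ lost derivatives through the iteration should yield $q,c_i\in C^{k-s}$.

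A second delicate point to address is the sharpness of the asymmetric bound $a_i\in C^{k-i}$ as stated. Already in the model case $h=y^s$ the remainder modulo $y^s$ is uniquely determined, and Taylor yields $a_i\in C^{k-s+i}$; this sits inside $C^{k-i}$ precisely when $i\ge s/2$, while for $i<s/2$ the Taylor regularity is strictly weaker than $C^{k-i}$. Hence either the sharp statement should read $a_i\in C^{\min(k-i,\,k-s+i)}$ (equivalently, $a_i\in C^{k-s+\min(i,\,s-i)}$), or an additional mechanism is required --- for instance, using the freedom afforded by having $h\ne y^s$ to absorb corrections into $g$ and reshuffle regularities across the $a_i$. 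Clarifying the intended sharp exponents is the first item I expect to require careful thought before the rest of the outlined argument can be executed cleanly.
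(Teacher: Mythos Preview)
The paper does not prove this statement: it is explicitly labelled a conjecture (Conjecture~4.2) and is offered at the end of Section~4 as a guide toward the general $C^k$ case, not as an established result. There is therefore no proof in the paper against which to compare your proposal.

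That said, your two-step plan --- settle the model case $h=y^s$ via Taylor's formula with integral remainder, then reduce a general $y$-regular $h$ to the model case by a finite-differentiability Weierstrass--Malgrange preparation --- is the natural line of attack and mirrors the shape of the classical smooth argument. You are right that the essential difficulty lives entirely in the preparation step and its regularity bookkeeping; the paper itself signals this by citing Malgrange as the key input in the $C^\infty$ proofs of Theorems~3.1.1 and~3.2.1, and the reference~[Ba] (Bakhtin's finite-smoothness preparation theorem) in the bibliography is presumably included for exactly this purpose and would be worth consulting before attempting to reprove preparation from scratch.

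Your critique of the stated exponents is correct and is the most substantive contribution here. In the model case $h=y^s$ the remainder modulo $y^s$ is unique, so the $a_i$ are forced to equal the Taylor coefficients $a_i(x)=\frac{1}{(s-i)!}(\partial_y^{s-i}f)(x,0)\in C^{k-(s-i)}=C^{k-s+i}$, and for $i<s/2$ this is strictly weaker than the conjectured $C^{k-i}$. A concrete witness: with $m=1$, take $f(x,y)=g(x+y)$ for a suitable $g\in C^k\setminus C^{k+1}$ (e.g.\ $g(t)=|t|^{k+\alpha}$, $0<\alpha<1$); then $a_1=\tfrac{1}{(s-1)!}\,g^{(s-1)}$ is only $C^{k-s+1}$, not $C^{k-1}$, whenever $s\ge 3$. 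Since the paper's own motivation is ``guided by the Taylor expansion \ldots\ in the normal direction,'' the most plausible reading is that the intended bound is $a_i\in C^{k-(s-i)}$, with the regularity index matched to the number of $y$-derivatives taken rather than to $i$; your proposal would then go through in the model case with $g\in C^{k-s}$, and the general case would inherit whatever loss the $C^k$-preparation incurs.
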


\newpage
\baselineskip 13pt
{\footnotesize

\vspace{1em}

\noindent
chienhao.liu@gmail.com, chienliu@math.harvard.edu; \\
yau@math.harvard.edu

}


\begin{thebibliography}{AAaaaa}

\bibitem[Ba]{} V.I.~Bakhtin,
 {\it Weierstrass-Malgrange preparation theorem in the finitely smooth case},
 {\sl Func.\ Anal.\ Appl.}\ {\bf 24} (1990), 86--96.


\bibitem[Br]{} Th.~Br\"{o}cker,
 {\sl Differentiable germs and catastrophes},
 translated from the German, last chapter and bibliography by L. Lander.
 London Mathematical Society Lecture Note Series, 17.
 Cambridge University Press, 1975.

\bibitem[C-V]{} S.\ Cecotti and C.\ Vafa,
 {\it On classification of $N=2$ supersymmetic theories},
 {\sl Commun.\ Math.\ Phys.}\ {\bf 158} (1993), 569--644.

\bibitem[Du1]{} E.J.\ Dubuc,
 {\it $C^{\infty}$-schemes},
 {\sl Amer.\ J.\ Math.}\ {\bf 103} (1981), 683--690.

\bibitem[Du2]{} --------,
 {\it Open covers and infinitary operations in $C^{\infty}$-rings},
 {\sl Cah.\ Topol.\ G\'{e}om.\ Diff\'{e}r.\ Cat\'{e}g.}\ {\bf 22} (1981), 287--300.

\bibitem[Ha]{} R.\ Hartshorne,
 {\sl Algebraic geometry},
 GTM 52, Springer, 1977.

\bibitem[H-K]{} K.\ Hoffman and R.\ Kunze,
 {\sl Linear algebra}, 2nd ed., Prentice-Hall, 1971.

\bibitem[Jo]{} D.~Joyce,
 {\it Algebraic geometry over $C^{\infty}$-rings},
 arXiv:1001.0023v4 [math.AG].

\bibitem[Ko]{} A.\ Kock,
 {\sl Synthetic differential geometry}, 2nd ed.,
   London Math.\ Soc.\ Lect.\ Note Ser.\  333,
   Cambridge Univ.\ Press, 2006.

\bibitem[Liu]{} C.-H.\ Liu,
{\it A natural starting point for D-branes as fundamental objects in string theory},
  lecture notes in preparation;
  based on the lecture given at
   String Theory Seminar, Department of Physics at National Taiwan University,
   May 29, 2015;
   slides in a similar title obtainable at\\  
   \verb=https://www.researchgate.net/profile/Chien-Hao_Liu/=

\bibitem[L-Y1]{} C.-H.~Liu and S.-T.~Yau,
  {\it Azumaya-type noncommutative spaces and morphism therefrom:
       Polchinski's D-branes in string theory from Grothendieck's
       viewpoint},
  arXiv:0709.1515 [math.AG]. (D(1))

\bibitem[L-L-S-Y]{} S.~Li, C.-H.~Liu, R.~Song, S.-T.~Yau,
 {\it Morphisms from Azumaya prestable curves with a fundamental module
       to a projective variety:
      Topological D-strings as a master object for curves},
  arXiv:0809.2121 [math.AG]. (D(2))

\bibitem[L-Y2]{} C.-H.~Liu and S.-T.~Yau,
  {\it D-branes and Azumaya/matrix noncommutative differential geometry,
 I: D-branes as fundamental objects in string theory  and differentiable maps
    from Azumaya/matrix manifolds with a fundamental module to real manifolds},
  arXiv:1406.0929 [math.DG]. (D(11.1))

\bibitem[L-Y3]{} --------,
 {\it D-branes and Azumaya/matrix noncommutative differential geometry,
 II: Azumaya/matrix supermanifolds and differentiable maps therefrom
      - with a view toward dynamical fermionic D-branes in string theory},
  arXiv:1412.0771 [hep-th]. (D(11.2))

\bibitem[L-Y4]{} --------,
 manuscript in preparation.

\bibitem[Mal]{} B.~Malgrange,
%
%
 {\sl Ideals of differentiable functions},
  Oxford Univ.\ Press, 1966.

\bibitem[Mat1]{} J.N.~Mather,
 {\it Stability of $C^{\infty}$-mappings. I. The division theorem},
    {\sl Ann.\ Math.}\  {\bf 87} (1968), 89--104;
 {\it III. Finitely determined mapgerms},
    {\sl   I.H.E.S.\ Publ.\ Math.}\  {\bf 35} (1968), 279--308.

\bibitem[Mat2]{} --------,
 {\it On Nirenberg's proof of Malgrange's preparation theorem},
  {\sl Proceedings of Liverpool Singularities Symposium I} (1969/1970),
   C. T. C. Wall ed., 116--120,
   Lect.\ Notes Math.\ 192, Springer, 1971.

\bibitem[M-R]{} I.\ Moerdijk and G.E.\ Reyes,
 {\sl Models for smooth infinitesimal analysis},
  Springer, 1991.

\bibitem[Ni]{} L.~Nirenberg,
 {\it A proof of the Malgrange preparation theorem},
 {\sl Proceedings of Liverpool Singularities Symposium I} (1969/1970),
   C. T. C. Wall ed., 97--105,
   Lect.\ Notes Math.\ 192, Springer, 1971.

\bibitem[Po]{} J.~Polchinski,
 {\sl String theory},
 vol.\ I$\,$: {\sl An introduction to the bosonic string};
 vol.\ II$\,$: {\sl Superstring theory and beyond},
 Cambridge Univ.\ Press, 1998.


\end{thebibliography}
\end{document}